\newcommand{\norma}[1]{{\left\vert\kern-0.25ex\left\vert\kern-0.25ex\left\vert #1
    \right\vert\kern-0.25ex\right\vert\kern-0.25ex\right\vert}}
\newcommand{\bu}{\mathbf{u}}
\newcommand{\bG}{\mathbf{G}}
\newcommand{\Div}{\nabla\!\cdot\!}
\newcommand{\tbn}[1]{{\left\vert\kern-0.25ex\left\vert\kern-0.25ex\left\vert #1 \right\vert\kern-0.25ex\right\vert\kern-0.25ex\right\vert}}
\newtheorem{remark}{Remark}[section]
\newtheorem{lemma}{Lemma}[section]
\newtheorem{theorem}{Theorem}[section]
\title{Oscillatory and regularized shock waves for a modified Serre-Green-Naghdi system}
\author{Daria Bolbot}
\address{\textbf{D.~Bolbot:} Computer, Electrical and Mathematical Science and Engineering Division King Abdullah University of Science and Technology (KAUST)
Thuwal 23955-6900, Saudi Arabia}
\email{bolbot.daria@gmail.com}
\author{Dimitrios Mitsotakis}
\address{\textbf{D.~Mitsotakis:} Victoria University of Wellington, School of Mathematics and Statistics, PO Box 600, Wellington 6140, New Zealand}
\email{dimitrios.mitsotakis@vuw.ac.nz}
\author{Athanasios E. Tzavaras}
\address{\textbf{A.E. Tzavaras} Computer, Electrical and Mathematical Science and Engineering Division King Abdullah University of Science and Technology (KAUST)
Thuwal 23955-6900, Saudi Arabia}
\email{athanasios.tzavaras@kaust.edu.sa}
\date{\today}
\dedicatory{In memory of Vassilios Dougalis whose work remains an inspiration}
\begin{document}

\begin{abstract}

The Serre-Green-Naghdi equations of water wave theory have been widely employed to study undular bores. In this study, we introduce a modified Serre-Green-Naghdi system incorporating the effect of an artificial term that results in dispersive and dissipative dynamics. We show that, over sufficiently extended time intervals, effectively approximates the classical Serre-Green-Naghdi equations and admits dispersive-diffusive shock waves as traveling wave solutions. 
The traveling waves converge to the entropic shock wave solution of the shallow water equations when the dispersion and diffusion approach zero in a moderate dispersion regime. These findings contribute to an understanding of the formation of dispersive shock waves in the classical Serre-Green-Naghdi equations and the effects of diffusion in the generation and propagation of undular bores.

\end{abstract}

\maketitle

\section{Introduction}

An undular bore is a wave disturbance occurring in a medium, such as the Earth's atmosphere or the free surface of the ocean, usually appearing in the form of a traveling front followed by oscillations (undulations). Our primary focus here will be on surface water waves, where undular bores commonly appear in estuaries, river mouths, and are generally categorized as shallow water waves. They play a crucial role in the design of both offshore and onshore marine structures. Perhaps the most renowned tidal bore is the one found in the Qiantang River in southeast China. This tidal bore is the world's largest, measuring approximately $4~m$ in height and spanning a width of $3~km$, as reported by \cite{Ch2012}.

Undular bores were initially studied through laboratory experiments by Favre, \cite{Favre1935}, and are often referred to as Favre's waves. Experimental evidence has shown that the shape of a tidal bore depends on its phase speed. The scaled phase speed of a wave is known as the Froude number ($Fr$), and its formula is given by $Fr=s/\sqrt{gD}$, where $s$ represents the wave phase speed, $g$ stands for the gravitational acceleration constant, and $D$ denotes the water depth (the distance from the undisturbed free water surface to the horizontal bottom). For small values of the Froude number, typically up to about $Fr = 1.3$, an undular bore consists of a traveling front followed by decreasing-magnitude oscillations or undulations, and turbulence-induced dissipation may not be significant. These undulations become smaller as the Froude number increases. When $Fr \geq 1.4$, these waves begin to break, causing the undulations to vanish and leading to dominant dissipative effects caused by turbulence \cite{Chanson2010,Ch2012}.

Numerical modeling of undular bores presents significant challenges due to the incomplete understanding of dissipation effects and the intricate nature of mathematical turbulence modeling. Perhaps the most accurate model for water waves capable of describing undular bores would be the Navier-Stokes equations \cite{KC2006,KC2009}. However, the Navier-Stokes equations have not yet been fully comprehended, and their numerical solutions are exceedingly complex. For these reasons, researchers investigating the generation and propagation of undular bores often turn to simplified mathematical models, such as various Boussinesq systems, which are designed to describe weakly non-linear and weakly dispersive water waves, \cite{CC2021}.

In his pioneering work  Peregrine \cite{Pere1966} introduced the first set of nonlinear and dispersive water wave equations specifically tailored for studying undular bores. As demonstrated in \cite{BMT2022}, Peregrine's system can provide a satisfactory description of undular bores, particularly for small values of the Froude number. Moreover, the incorporation of appropriate diffusive terms can effectively account for dissipative effects due to turbulence, thus extending the applicability of Peregrine's system to larger Froude numbers.

However, it is worth noting that the accuracy of predicted undular bore amplitudes using Peregrine's system has its limitations, as highlighted in \cite{BMT2022}. In the absence of dissipation, Peregrine's system is a weakly nonlinear and weakly dispersive model and such a model is ideal for small amplitude and long waves (compared to the depth). This property of Peregrine's system has prompted researchers to explore the extension of Boussinesq equations using alternative model equations that do not rely on the small amplitude assumption. One such model is the so-called Serre-Green-Naghdi equations that describe strongly nonlinear and weakly dispersive water waves. The Serre-Green-Naghdi equations were initially derived by Serre in \cite{Serre} to describe surface water waves over a constant bottom topography. Subsequently, Su and Gardner revisited these equations in their work \cite{SG1963}. The extension of Serre's ideas into two spatial dimensions, along with the consideration of variable bottom topography, was accomplished by Green and Naghdi in  \cite{GN1976}. 

Our aim is to elucidate the behavior of the Serre-Green-Naghdi (SGN) equations in describing undular bores, especially during their early stages of their formation. Specifically, we will study the effect on the SGN system of an artificial dissipation term, expressed in nondimensional and scaled variables as follows:
\begin{equation}\label{eq:Serre1}
\begin{aligned}
& h_t + (hu)_x = 0\ ,\\
& u_t + h_x + uu_x - \frac{\delta}{3h}\left[h^3(u_{xt}+uu_{xx}-u_x^2)\right]_x = \varepsilon \frac{(hu)_{xx}}{h}\ .
\end{aligned}
\end{equation}
Here, $x$ and $t$ represent (nondimensionalized) space and time variables, respectively. The quantity $h(x,t) = 1 + \eta(x,t)>0$ represents the total scaled depth, and $u(x,t)$ denotes the depth-averaged horizontal velocity of the fluid. The artificial term on the right-hand side of (\ref{eq:Serre1}) causes dissipation in the phase space of the traveling wave solutions and is employed to counterbalance the effects of dispersion during the formation of undular bores. The classical SGN equations are recovered when $\varepsilon=0$. The choice of the parameter $\varepsilon$ is based on experimentation
and, due to its heuristic nature, may serve as an approximation of bulk damping within the fluid's body. The SGN equations with wave breaking terms were studied in \cite{CC2021}, showing excellent agreement with undular bores, particularly those generated in an experimental dam-break setup. The SGN equations are known to be locally-well posed in time,  \cite{Israwi11}:  for  positive initial elevations  $h(x,0) \ge c_1 >0$, there is a $T>0$ and $c_0>0$ such that 
\begin{equation}\label{eq:cavity}
    h(x,t)\geq c_0>0\quad  \mbox{for $t\in[0,T]$}\ .
\end{equation}
This is the so-called {\em non-cavitation hypothesis} and we assume its validity here for all $\varepsilon \in[0,1)$, at least locally in time.


The reasons for choosing a model with the particular artificial term to study undular bores are the following: (i) There are no dissipative fully nonlinear wave equations derived in the literature thus far, \cite{DH2021}; (ii) Nonlinear and dispersive wave equations in the presence of dissipative dynamics exhibit special traveling wave solutions that resemble undular bores  \cite{J1972,BS1985,BMT2022}; (iii) The approximation properties of the modified system in relation to the classical SGN equations justify the formation of dispersive shock waves in the latter from initial conditions that connect different states \cite{KKM2016, ZPR2017}. 

Traveling wave solutions appearing in dispersive-diffusive dynamical systems and connect different states are called dissipative-dispersive shock waves \cite{EHS2017}. When the dissipation is sufficiently small compared to the dispersion, then the traveling waves consist of a traveling front followed by undulations that we will call  oscillatory shock waves \cite{HN2007}. When the dissipation exceeds a certain limiting value, then the solution becomes monotonic and loses its oscillatory nature. We will refer to such traveling waves as regularized shock waves since they have a form closer to classical entropic shock waves, and even converge to such solutions when the dispersion and the dissipation tend to zero at an appropriate rate, \cite{BMT2023}.

In the present work we carry two objectives:  (i)  We investigate the existence and characteristics of traveling wave solutions for the modified SGN equations (\ref{eq:Serre1}), building upon the groundwork laid by \cite{BS1985}. (ii) We show that solutions of the modified SGN equations with $\varepsilon>0$, are effective approximations of the corresponding solutions of the classical SGN equations with $\varepsilon=0$ emanating from the same initial conditions. Our analysis reveals that the discrepancy between the two  solutions is of order $O(\varepsilon t)$. This implies that these solutions remain in close proximity for time intervals of the order of $O(1/\varepsilon)$. For larger time intervals, Whitham's modulation theory has been employed to describe the evolution of dispersive shock waves, which also resemble in some situations undular bores \cite{EH2016,EGS2006,EGS2008}. These  findings complement the numerical results of \cite{PZR2018} about the formation of dispersive shock waves, demonstrating that it is rather impossible for regularized shock waves to appear in the absence of dissipation in the SGN equations.

The structure of the paper is as follows: The dynamics of the traveling wave solutions of the modified SGN equations is studied in Section \ref{sec:dynamics}. The proof of the existence of such solutions is presented in Section \ref{sec:existence}. The dichotomy of the dissipative-dispersive shock waves in oscillatory and regularized is presented in Section \ref{sec:shapes}. The limiting behavior of the modified SGN system is studied in Section \ref{sec:comparison}. We justify that solutions of the modified and classical SGN equations remain close for small dissipations at a time scale $O(\varepsilon t)$ and converge when the dissipation is vanishing. Moreover, we review the convergence theory of the traveling wave solutions to entropic shock waves of the shallow water equations when the  dispersion and diffusion tend to zero simultaneously in the moderate dispersion regime. In Section \ref{sec:comparison}  we also present a numerical investigation assessing the precision of the mathematical model.

\section{The dynamics of the traveling waves}\label{sec:dynamics}

The main objective of this work is to study traveling wave solutions of the system (\ref{eq:Serre1}) that resemble undular bores. Before establishing the existence of such traveling waves we study their dynamical picture. For convenience, we write the system (\ref{eq:Serre1}) in conservative variables as
\begin{equation}
\label{eq:Serre2}
\begin{aligned}
& h_t+q_x=0\ ,   \\
& q_t+\left(\frac{1}{2}h^2+\frac{q^2}{h}\right)_x-\frac{\delta}{3}\gamma_x=\varepsilon q_{xx}\ ,
\end{aligned}
\end{equation}
where here $q=hu$ is the so called momentum, and $\gamma=h^3[u_{xt}+uu_{xx}-u_x^2]$ is an approximation of the fluid vertical acceleration at the bottom. The non-dispersive shallow water waves equations coincides with the hyperbolic part of the SGN equations (\ref{eq:Serre2}) for $\delta=\varepsilon=0$
\begin{equation}
\label{SW}
\begin{aligned}
& h_t+q_x=0\ ,   \\
& q_t+\left(\frac{1}{2}h^2+\frac{q^2}{h}\right)_x=0\ .\
\end{aligned}
\end{equation}

For the description of the propagation of a traveling wave with speed $s>0$ we consider the Riemann initial data
\begin{equation}
(h(x,0), q(x,0))=(h_0, q_0) 
=
    \begin{cases}
& (h_r, q_r), \text{ for } x\leq 0\ ,   \\
& (h_l, q_l), \text{ for } x>0 \ .
\end{cases}
\end{equation}
For the particular problem, the Jacobian matrix is the matrix
\begin{equation}
    \begin{pmatrix}  0 & 1\\ h-\frac{q^2}{h^2} & 2\frac{q}{h} \end{pmatrix}\ ,
\end{equation}
and has eigenvalues
\begin{equation}
\lambda_{1,2}=\frac{q}{h} \pm \sqrt{h}\ .
\end{equation}
The Rankine-Hugoniot shock conditions for this system are 
\begin{equation}
\label{RH}
s= \frac{q_l-q_r}{h_l-h_r}\quad \text{ and }\quad s= \frac{\frac{h_l^2}{2}-\frac{h_r^2}{2}+\frac{q_l^2}{h_l}-\frac{q_r^2}{h_r}}{q_l-q_r}\ .
\end{equation}
Without loss of generality, we consider the phase speed $s>0$ and assume the admissibility Lax condition 
$$\lambda_2(U_r)<s<\lambda_2(U_l)\ ,$$
or equivalently
\begin{equation}
\label{ADM}
\frac{q_r}{h_r}+\sqrt{h_r}<s<\frac{q_l}{h_l}+\sqrt{h_l}\ .
\end{equation}

Solving equations (\ref{RH}), given $s$, $h_r$ and $q_r$ and assuming that $h_l>h_r>0$ we obtain the following formulas
\begin{equation}\label{eq:leftlims}
h_l=\frac{-h_r + \sqrt{h_r^2 + 8h_r( q_r/h_r- s)^2}}{2}, \quad q_l=\frac{ 2 q_r - 3 s h_r  +s \sqrt{h_r^2 + 8 h_r (q_r/h_r -  s)^2}}{2}\ .
\end{equation}
The fact that $h_l>h_r$ follows from the admissibility conditions (\ref{ADM}).

The intrinsic properties of the SGN system of equations are linked to the behavior of water waves. Since oscillatory shock waves are dissipative-dispersive perturbations of entropic shocks of the hyperbolic shallow water wave equations, it is expected that the Lax conditions will influence the dissipative-dispersive shock waves of the modified SGN equations. For these reasons, we assume the validity of Lax conditions (\ref{ADM}) for the traveling wave solutions of the modified SGN equations (\ref{eq:Serre2}).

Let $U_r=(h_r,q_r)$ be the right limit of the traveling wave solution of (\ref{eq:Serre2}). We will determine all the possible states $U=(h,q)$ that can be connected to $U_r$ by a front shock.


Consider now the modified SGN equations written in the form
\begin{equation}
\label{Serre}
\begin{aligned}
& h_t+q_x=0\ ,   \\
& q_t+\left(\frac{1}{2}h^2+\frac{q^2}{h}\right)_x-\frac{\delta}{3}\gamma_x=\varepsilon q_{xx}\ ,
\end{aligned}
\end{equation}
where $\gamma=h^3(u_{tx}+uu_{xx}-u_{x}^2)$. Since we are looking for travelling wave solutions, we consider the {\em ansatz} $$h(x,t)=\zeta(\xi) \quad \text{ and } \quad q(x,t)=w(\xi)\ ,$$
where 
$$\xi=-\frac{1}{\sqrt{\delta}}(x-st)\ .$$
Recall that the variable $\zeta>0$ denotes the total depth (distance between the bottom and the free-surface of the water).

\begin{remark}\label{rem:lcremark}
If $w_l=0$ and $\zeta_l=1$, then for $s>0$  we have $K_1=-s<0$ without the Lax condition (\ref{eq:lc}). On the other hand, for general traveling waves, the Lax condition apparently must be imposed.
\end{remark}

We assume that 
\begin{equation}\label{eq:limits}
\lim_{\xi\to+\infty}\zeta(\xi)=\zeta_r, \quad \lim_{\xi\to-\infty} \zeta(\xi)=\zeta_l,\quad \lim_{\xi\to+\infty} w(\xi)=w_r,\quad \text{and}\quad \lim_{\xi\to-\infty}w(\xi)=w_l\ , 
\end{equation}
with $\zeta_l=h_r<h_l=\zeta_r$, and $w_l=q_r<q_l=w_r$. We also assume that for $i=1,2,\dots$ all the derivatives of $\zeta$ and $w$ vanish at infinity:
$$\lim_{|\xi|\to \pm\infty} \zeta^{(i)}=\lim_{|\xi|\to \pm\infty} w^{(i)}=0\ .$$

Note that the Rankine-Hugoniot conditions become
\begin{equation}\label{eq:rh2}
s= \frac{w_l-w_r}{\zeta_l-\zeta_r}\quad \text{ and }\quad s= \frac{\frac{\zeta_l^2}{2}-\frac{\zeta_r^2}{2}+\frac{w_l^2}{\zeta_l}-\frac{w_r^2}{\zeta_r}}{w_l-w_r}\ ,
\end{equation}
and the Lax condition
\begin{equation}
\label{eq:lc}
\frac{w_l}{\zeta_l}+\sqrt{\zeta_l}<s<\frac{w_r}{\zeta_r}+\sqrt{\zeta_r}\ .
\end{equation}
Moreover, relationships (\ref{eq:limits}) take the form
\begin{equation}\label{eq:leftlims2}
\zeta_r=\frac{-\zeta_l + \sqrt{\zeta_l^2 + 8\zeta_l( w_l/\zeta_l- s)^2}}{2}, \quad w_r=\frac{ 2 w_l - 3 s \zeta_l  +s \sqrt{\zeta_l^2 + 8 \zeta_l (w_l/\zeta_l -  s)^2}}{2}\ .
\end{equation}

This change of variables leads to the system 
\begin{equation}
\label{SerreTW}
\begin{aligned}
& -s\zeta'+w'=0\ ,   \\
& -sw'+\left(\frac{\zeta^2}{2}+\frac{w^2}{\zeta}\right)'-\frac{1}{3}\frac{d}{d\xi}\left\{\zeta^3\left(\frac{w}{\zeta}\right)''\left(\frac{w}{\zeta}-s\right)-\left[\left(\frac{w}{\zeta}\right)'\right]^2\right\}=-\frac{\varepsilon}{\sqrt{\delta}} w''\ ,\
\end{aligned}
\end{equation}
where the $'=\tfrac{d}{d\xi}$ denotes the ordinary derivative with respect to $\xi$.

Integrating system (\ref{SerreTW}) from $\xi$ to $\infty$ yields
\begin{equation}
\label{SerreTWInt}
\begin{aligned}
& -s\zeta+w=K_1\ ,   \\
& -sw+\frac{\zeta^2}{2}+\frac{w^2}{\zeta}-\frac{\zeta^3}{3}\left\{\left(\frac{w}{\zeta}\right)''\left(\frac{w}{\zeta}-s\right)-\left[\left(\frac{w}{\zeta}\right)'\right]^2\right\}=-\frac{\varepsilon}{\sqrt{\delta}} w'+K_2\ ,
\end{aligned}
\end{equation}
where 
\begin{equation}\label{eq:defk1}
K_1=-s\zeta_l+w_l=-s\zeta_r+w_r\ ,
\end{equation} and
\begin{equation}\label{eq:defk2}
K_2=-sw_l+\frac{\zeta_l^2}{2}+\frac{w_l^2}{\zeta_l}=-sw_r+\frac{\zeta_r^2}{2}+\frac{w_r^2}{\zeta_r}\ ,
\end{equation}
are constants. Note that 
\begin{equation}\label{eq:ineK1}
    K_1<0\ ,
\end{equation} since $-s\zeta_l+w_l<-\zeta_l\sqrt{\zeta_l}<0$ due to (\ref{eq:lc}). The last inequality gives the lowest bound for the speed $s$ to be
\begin{equation}\label{eq:speed}
    s>\frac{w_l+\zeta_l\sqrt{\zeta_l}}{\zeta_l}\ .
\end{equation}
Moreover, we have
\begin{equation}
    K_2-s K_1=\frac{K_1^2}{\zeta_l}+\frac{\zeta_l^2}{2}=\frac{K_1^2}{\zeta_r}+\frac{\zeta_r^2}{2}>0\ .
\end{equation}
Eliminating  $w$ in the system (\ref{SerreTWInt}) and dividing by $\zeta>0$ we obtain
\begin{equation}
\label{ODE2}
\frac{\zeta}{2}+\left(\frac{K_1}{\zeta}\right)^2-\frac{K_2-sK_1}{\zeta}+\frac{K_1^2}{3}\left(\frac{\zeta'}{\zeta}\right)'=-c\frac{\zeta'}{\zeta}\ ,
\end{equation}
where $c=s\varepsilon/\sqrt{\delta}$.

Define $z=\ln{\zeta}$. Then, we can write the equation (\ref{ODE2}) in a more compact form
\begin{equation}
\label{ODE2z}
\frac{K_1^2}{3}z''+F(e^z)=-cz'\ ,
\end{equation}
where 
\begin{equation}\label{eq:functionF}
F(e^z)=\frac{\zeta}{2}+\left(\frac{K_1}{\zeta}\right)^2-\frac{K_2-sK_1}{\zeta}=F(\zeta)\ .
\end{equation}
It is easy to see that $F(\zeta_l)=0$. Performing long division we rewrite $F$ in the factorized form
\begin{equation}\label{eq:ffact}
F(\zeta)=\frac{(\zeta-\zeta_l)\left(\zeta^2+\zeta\zeta_l-2\frac{K_1^2}{\zeta_l}\right)}{2\zeta^2}\ .
\end{equation}

\begin{lemma}\label{lem:limits}
Suppose that the equation (\ref{ODE2}) has a solution that satisfies conditions (\ref{eq:limits}) and (\ref{eq:lc}). Then we have
\begin{equation}\label{eq:spboundrelz}
\zeta_r= \frac{K_1 - w_l}{2s} +\frac{1}{2}\sqrt{\frac{8K_1^2+\zeta_l^3}{\zeta_l}}= -\frac{\zeta_l}{2} +\frac{1}{2}\sqrt{\frac{8(-s \zeta_l+  w_l)^2+\zeta_l^3}{\zeta_l}}>\zeta_l\ ,
\end{equation}
and
\begin{equation}\label{eq:spboundrel}
w_r = \frac{3K_1-w_l}{2}+\frac{s\sqrt{8K_1^2+\zeta_l^3}}{2\sqrt{\zeta_l}}=\frac{2w_l -3 s \zeta_l}{2} + \frac{s \sqrt{8(-s \zeta_l+  w_l)^2+\zeta_l^3}}{2 \sqrt{\zeta_l}}>w_l \ .
\end{equation}
\end{lemma}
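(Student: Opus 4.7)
The plan is to extract a boundary condition for the ODE from the decay assumptions and then solve an algebraic system. Starting from the compact form (\ref{ODE2z}), I would take the limit $\xi \to +\infty$: by (\ref{eq:limits}) together with the assumption that all derivatives of $\zeta$ (hence of $z=\ln\zeta$) vanish at infinity, both $z''\to 0$ and $z'\to 0$. Continuity of $F$ and of the exponential then forces $F(\zeta_r)=0$. Thus $\zeta_r$ must be a zero of the function $F$ introduced in (\ref{eq:functionF}).

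Next I would exploit the explicit factorization (\ref{eq:ffact}): the zeros of $F$ are $\zeta=\zeta_l$ and the roots of the quadratic $\zeta^2+\zeta\zeta_l-2K_1^2/\zeta_l=0$. Since we are in the regime $\zeta_l<\zeta_r$, the value $\zeta_r$ cannot coincide with $\zeta_l$, so it must be a root of the quadratic. The product of its two roots equals $-2K_1^2/\zeta_l<0$, so exactly one root is positive; requiring $\zeta_r>0$ therefore pins down
\begin{equation*}
\zeta_r=\frac{-\zeta_l+\sqrt{\zeta_l^2+8K_1^2/\zeta_l}}{2}=-\frac{\zeta_l}{2}+\frac{1}{2}\sqrt{\frac{\zeta_l^3+8K_1^2}{\zeta_l}}.
\end{equation*}
Substituting $K_1=-s\zeta_l+w_l$ from (\ref{eq:defk1}), and using $-\zeta_l/2=(K_1-w_l)/(2s)$, gives the two equivalent expressions for $\zeta_r$ in (\ref{eq:spboundrelz}). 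For $w_r$ I would simply use the Rankine--Hugoniot identity $w_r=K_1+s\zeta_r$ from (\ref{eq:defk1}) and collect terms, yielding (\ref{eq:spboundrel}).

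It remains to verify the two inequalities $\zeta_r>\zeta_l$ and $w_r>w_l$. The second is immediate once the first is known, since $w_r-w_l=(K_1+s\zeta_r)-(K_1+s\zeta_l)=s(\zeta_r-\zeta_l)>0$. For $\zeta_r>\zeta_l$, squaring reduces the claim to $K_1^2>\zeta_l^3$; but the Lax condition (\ref{eq:lc}) gives $w_l/\zeta_l+\sqrt{\zeta_l}<s$, equivalently $K_1=w_l-s\zeta_l<-\zeta_l^{3/2}$, which implies $K_1^2>\zeta_l^3$ and closes the argument.

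The only mildly subtle step is justifying $F(\zeta_r)=0$ from the vanishing of $z''$ and $z'$ at infinity; once that reduction is made the rest is the quadratic formula combined with the Lax condition, so the only real obstacle is ensuring that the asymptotic decay hypothesis in (\ref{eq:limits}) is strong enough (on $\zeta$ and its derivatives) to pass the limit through $F$ and the logarithmic change of variables.
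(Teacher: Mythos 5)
Your overall route is the same as the paper's: pass to the limit $\xi\to+\infty$ in (\ref{ODE2z}) to get $F(\zeta_r)=0$, select the admissible root of the quadratic factor, recover $w_r$ from $w_r=K_1+s\zeta_r$, and use the Lax condition in the form $K_1^2>\zeta_l^3$ to obtain the strict inequalities. Your observation that $w_r-w_l=s(\zeta_r-\zeta_l)$ makes the second inequality immediate is in fact slicker than the paper's separate estimate for $w_r$.

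There is, however, one genuine gap: you dismiss the root $\zeta_r=\zeta_l$ by declaring that ``we are in the regime $\zeta_l<\zeta_r$.'' But the lemma's conclusion is precisely that $\zeta_r>\zeta_l$, so the strict inequality cannot be taken as an input; the only usable hypothesis is that the solution is a non-constant orbit with the stated limits, and a priori that leaves open the possibility of a homoclinic connection with $\zeta_r=\zeta_l$. The paper excludes this case by a dissipation identity: multiplying (\ref{ODE2z}) by $z'$ and integrating over $\mathbb{R}$ gives
\begin{equation*}
-c\int_{-\infty}^{\infty}(z')^2\,d\xi=\int_{z_l}^{z_r}F(e^z)\,dz,
\end{equation*}
and if $z_r=z_l$ the right-hand side vanishes, forcing $z'\equiv 0$ and hence a constant solution, a contradiction. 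You flagged the passage to the limit in $F$ as the only subtle step, but that part is routine given the decay hypotheses; the step that actually requires an argument is this exclusion of the homoclinic case, and your proposal as written is circular there. With that identity inserted, the rest of your argument is correct and matches the paper.
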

\begin{proof}
Taking the limit $\xi\to+\infty$ in (\ref{ODE2z}) leads to the equation
$$ 
F(e^{z_r})=F(\zeta_r)=0\ ,
$$
which has three roots, namely, 
$$
\begin{aligned}
\zeta_r&=\zeta_l\ ,\\
\zeta_r&=-\frac{\zeta_l}{2} \pm\frac{1}{2}\sqrt{\frac{8K_1^2+\zeta_l^3}{\zeta_l}}\ .
\end{aligned}
$$
Because we have assumed that $\zeta>0$ we exclude the negative root. Now, let $\zeta_r=\zeta_l$. We multiply (\ref{ODE2z}) with $z$ and integrate over $\mathbb{R}$. Because $F(\zeta)$ is bounded for $\zeta>0$, we have that 
$$
-c\int_{-\infty}^{\infty}(z')^2~d\xi=\int_{-\infty}^{+\infty} F(e^z)z'~d\xi =\int_{z_l}^{z_r} F(e^z)~dz=0\ .
$$
This means that $z'=0$ for all $\xi\in\mathbb{R}$ and thus the solution is constant. 
This contradicts with the assumption that the solution is not constant, and therefore
we have to exclude the case $\zeta_r=\zeta_l$. The only remaining possibility is the case (\ref{eq:spboundrelz}). Using the first equation of (\ref{SerreTWInt}) we obtain also the equation (\ref{eq:spboundrel}).

Finally, we verify that $w_r>w_l$ and $\zeta_r>\zeta_l$: From the Lax condition (\ref{eq:lc}) we have that $s\zeta_l-w_l>\zeta_l\sqrt{\zeta_l}$, which implies
\begin{equation}
K_1^2>\zeta_l^3\ .
\end{equation}
Therefore, we have that
$$
\zeta_r =-\frac{\zeta_l}{2}+\frac{1}{2}\sqrt{\frac{8K_1^2+\zeta_l^3}{\zeta_l}}>-\frac{\zeta_l}{2}+\frac{3\zeta_l}{2}=\zeta_l\ .
$$
Similarly, we verify that
$$
w_r = \frac{3K_1-w_l}{2}+\frac{s\sqrt{8K_1^2+\zeta_l^3}}{2\sqrt{\zeta_l}}> \frac{3K_1-w_l}{2}+\frac{s\sqrt{9\zeta_l^3}}{2\sqrt{\zeta_l}}=\frac{2w_l-3s\zeta_l}{2}+\frac{3s\zeta_l}{2}=w_l\ ,
$$
which completes the proof.
\end{proof}

\begin{remark}
In the special case where $w_l=0$ and $\zeta_l=1$ the equation (\ref{eq:spboundrel}) is simplified to 
\begin{equation}\label{eq:spbounrel2}
w_r=s\frac{-3 + \sqrt{1 + 8 s^2}}{2}>0\ ,
\end{equation}
which for $s>1$ implies $w_r>0$. Similarly, the equation (\ref{eq:spboundrelz}) is simplified to
\begin{equation}\label{eq:spbounrel2z}
\zeta_r=\frac{-1 + \sqrt{1 + 8 s^2}}{2}>1\ .
\end{equation}
\end{remark}

It is worth mentioning that the function $F$ can be factorized into
\begin{equation}\label{eq:ffact2}
F(\zeta)=\frac{(\zeta-\zeta_l)(\zeta-\zeta_r)(\zeta+\zeta_l+\zeta_r)}{2\zeta^2}\ .
\end{equation}
Introducing the auxiliary variable $v=z'$ we write the equation (\ref{ODE2z}) as a system of first order equations:
\begin{equation}
\label{DynSys}
\begin{aligned}
& z'=v\ ,  \\
& v'=-\frac{3}{K_1^2} F(e^z)-\frac{3c}{K_1^2} v\ .
\end{aligned}
\end{equation}
For the sake of convenience, we will use the notation $\bu'=\bG(\bu)$ for the system (\ref{DynSys}) where 
$$\bu=\begin{pmatrix}z\\v\end{pmatrix}\quad \text{and}\quad 
\bG(\bu)=\begin{pmatrix}
    v\\
    -\frac{3}{K_1^2}F(e^z)-\frac{3c}{K_1^2}v
    \end{pmatrix}\ .
$$

For the analysis of the dynamics of this particular system we compute its critical points. Obviously $v=0$ is a requirement for a critical point because of the first equation of (\ref{DynSys}). From (\ref{eq:ffact}) we obtain three zeros for the function $F(\zeta)$. In particular, we have the zeros $\zeta_1=\zeta_l$, $\zeta_2=\zeta_r$ and $\zeta_3=-(\zeta_l+\zeta_r)$. 
we obtain the zeros
$$
\begin{aligned}
& (\zeta_1,v_1)=(\zeta_l,0)\ ,\\
& (\zeta_2,v_2)=(\zeta_r,0)=\left(-\frac{\zeta_l}{2}+\frac{1}{2}\sqrt{\frac{8K_1^2+\zeta_l^3}{\zeta_l}},0\right)\ ,\\
& (\zeta_3,v_3)=(-(\zeta_l+\zeta_r),0)=\left(-\frac{\zeta_l}{2}-\frac{1}{2}\sqrt{\frac{8K_1^2+\zeta_l^3}{\zeta_l}},0\right)\ .
\end{aligned}
$$ 
The zeros of $F$, in $(z,v)$ variables, are $(\ln{\zeta_l},0)$ and $(\ln{\zeta_r},0)$.
However, because $\zeta>0$ is the total depth, we have to exclude $\zeta_3$.

To study the stability properties of the critical points we linearize system (\ref{DynSys}) around the point $(z_i,v_i)^T=(z_i,0)^T$ for $i=1,2$. We can write the corresponding linearization as
\begin{equation}
\label{DynSysLin}
\begin{pmatrix}  z-z_i\\ v \end{pmatrix}'
    =
\begin{pmatrix}  0 & 1
 \\ -\frac{3}{K_1^2}F'(e^{z_i})e^{z_i} & -\frac{3c}{K_1^2} \end{pmatrix} \begin{pmatrix}  z-z_i\\ v \end{pmatrix}\ ,
\end{equation}
where
\begin{equation}\label{eq:fderiv}
F'(\zeta)=\left(\frac{\zeta}{2}+\frac{K_1^2}{\zeta^2}-\frac{K_2-sK_1}{\zeta}\right)'
=\frac{\zeta^3+2(K_2-sK_1)\zeta-4K_1^2}{2\zeta^3}
=\frac{\zeta^3+\frac{2K_1^2+\zeta_l^3}{\zeta_l}\zeta-4K_1^2}{2\zeta^3}\ .
\end{equation}

The characteristic polynomial of the Jacobian matrix $\bG'(\bu)$ is 
\begin{equation}\label{eq:characteristiceq}
\lambda^2+\frac{3c}{K_1^2}\lambda+\frac{3}{K_1^2}F'(e^{z_i})e^{z_i}=0\ ,
\end{equation}
thus the eigenvalues of the Jacobian matrix are 
\begin{equation}\label{eq:eigs}
\lambda^i_{\pm}=-\frac{3c}{2K_1^2}\left[1\pm\sqrt{1-\frac{4K_1^2}{3c^2}F'(e^{z_i})e^{z_i}}\right],\qquad \text{for $i=r, l$} \ .
\end{equation}

From the admissibility condition (\ref{eq:lc}), we have $(s-\frac{w_l}{\zeta_l})^2>\zeta_l$ and $(s-\frac{w_r}{\zeta_r})^2<\zeta_r$. Writing 
$$K_1^2=\zeta_l^2\left(s-\frac{w_l}{\zeta_l}\right)^2=\zeta_r^2\left(s-\frac{w_r}{\zeta_r}\right)^2\ ,$$ 
yields $\zeta_l^3<K_1^2<\zeta_r^3$. Thus, from the relation (\ref{eq:fderiv}) and for $z_i=\zeta_l$ we have that
\begin{equation}\label{eq:concav}
F'(\zeta_l)=\frac{\zeta_l^3-K_1^2}{\zeta_l^3}<0\ .
\end{equation}
On the other hand, for $z_i=\zeta_r$ we have
\begin{equation}\label{eq:convex}
F'(\zeta_r)=\frac{\zeta_r^3-K_1^2}{\zeta_r^3}>0\ .
\end{equation}
Therefore, $\zeta_l$ is a saddle point (eigenvalues are real, distinct, and of opposite signs), and the point $\zeta_r$ is either stable spiral or nodal point. In particular,
$$
\zeta_r = \begin{cases}
  \text{stable spiral}, & \text{if }c^2< \frac{4}{3}K_1^2 F'(\zeta_r)\zeta_r \\
  \text{stable node}, & \text{if } c^2\geq \frac{4}{3}K_1^2 F'(\zeta_r)\zeta_r\\
\end{cases}\ .
$$

In the cases where $\zeta_r$ is a spiral, the solution will be an oscillatory shock wave while when $\zeta_r$ is a node then the solution will be a regularized shock wave \cite{BMT2022}. The existence of such orbits is established in the next section.

\begin{remark}
So far, we have assumed that $\zeta_l < \zeta_r$ and $w_l < w_r$. Showing the existence of traveling waves with such properties is sufficient to demonstrate that traveling waves with $\zeta_l > \zeta_r$ and $w_l > w_r$ also exist. This is due to the symmetry: 
$$\xi \to -\xi, \quad s \to -s, \quad w \to -w, \quad \text{and} \quad \zeta \to \zeta.$$
Furthermore, it is not possible to have $\zeta_l < \zeta_r$ and $q_l > q_r$ with $s > 0$ (front shock), because in such a case, according to the definition of $K_1$ in equation (\ref{eq:defk1}), we would have 
$s=(q_l - q_r)/(\zeta_l - \zeta_r) < 0$.
\end{remark}

The existence of traveling waves for a system in the form  (\ref{DynSys}) has been shown in \cite[Theorem 1]{BMT2022}. In order to study the solutions to the system (\ref{DynSys}) we will follow the steps of the work \cite{BS1985}. This methodology revealed a new pattern for the fast traveling dispersive shocks of the bidirectional Peregrine system \cite{BMT2022}, which has certain differences compared to the traveling waves of the unidirectional KdV-Burgers analog \cite{BS1985}). For the case of the SGN equations though we will show that the traveling wave solutions are similar to those of the KdV equation of \cite{BS1985}. Under the assumption of existence of solutions, we show that the solution $\zeta$ remains always above the steady state $\zeta_l$:
\begin{lemma}\label{lem:bounds}
If $(\zeta,w)$ is a unique non-constant solution to (\ref{ODE2}) that satisfies the conditions (\ref{eq:limits})--(\ref{eq:lc}) and (\ref{eq:speed}), then 
$$\zeta_l<\zeta(x)<\left(\frac{K_1}{\zeta_l}\right)^2\ .$$
\end{lemma}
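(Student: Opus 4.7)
The plan is a Lyapunov/phase-plane argument applied to the scalar equation (\ref{ODE2z}). Multiplying (\ref{ODE2z}) by $z'$ and writing $F(e^z)z'=\tfrac{d}{d\xi}H(\zeta)$, where $H$ is an antiderivative of $F(\zeta)/\zeta$ in the $\zeta$-variable, yields the dissipation identity
$$\frac{d}{d\xi}\Bigl[\tfrac{K_1^2}{6}(z')^2 + H(\zeta)\Bigr] = -c(z')^2 \le 0,$$
where a direct integration of $F(\zeta)/\zeta=\tfrac12+K_1^2/\zeta^3-(K_2-sK_1)/\zeta^2$ gives
$$H(\zeta)=\frac{\zeta}{2}-\frac{K_1^2}{2\zeta^2}+\frac{K_2-sK_1}{\zeta}.$$
Setting $E(\xi):=\tfrac{K_1^2}{6}(z')^2+H(\zeta(\xi))$, the assumptions (\ref{eq:limits}) and the vanishing of $z'$ at $\pm\infty$ give $\lim_{\xi\to-\infty}E(\xi)=H(\zeta_l)$, and since $E$ is non-increasing we obtain the pointwise bound $H(\zeta(\xi))\le H(\zeta_l)$ for every $\xi\in\mathbb{R}$.

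Next I would translate this Lyapunov bound into the claimed interval via a factorization of $H-H(\zeta_l)$. Because $H'(\zeta_l)=F(\zeta_l)/\zeta_l=0$, the value $\zeta_l$ is a double root of the cubic $P(\zeta):=2\zeta^2[H(\zeta)-H(\zeta_l)]$. Using $K_2-sK_1=K_1^2/\zeta_l+\zeta_l^2/2$ one computes that the product of the roots of $P$ equals $K_1^2$, so the third root is forced to be $K_1^2/\zeta_l^2$ and
$$H(\zeta)-H(\zeta_l)=\frac{(\zeta-\zeta_l)^2\bigl(\zeta-K_1^2/\zeta_l^2\bigr)}{2\zeta^2}.$$
For $\zeta>0$ the sign of $H(\zeta)-H(\zeta_l)$ coincides with that of $\zeta-K_1^2/\zeta_l^2$, so the Lyapunov inequality and positivity of $\zeta$ immediately give the upper bound $\zeta(\xi)\le K_1^2/\zeta_l^2$.

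Finally I would upgrade both inequalities to strict ones and establish the lower bound using uniqueness for the first-order system (\ref{DynSys}). If $\zeta(\xi_0)=\zeta_l$ at any finite $\xi_0$, the energy bound forces $z'(\xi_0)=0$, so the orbit passes through the critical point $(\ln\zeta_l,0)$; by uniqueness the solution must be constant, contradicting non-constancy. Hence $\zeta(\xi)\ne\zeta_l$ for all finite $\xi$, and continuity together with $\zeta(+\infty)=\zeta_r>\zeta_l$ gives $\zeta(\xi)>\zeta_l$. A parallel argument at $\xi_0$ with $\zeta(\xi_0)=K_1^2/\zeta_l^2$ shows $E(\xi_0)=H(\zeta_l)$; monotonicity of $E$ then forces $E\equiv H(\zeta_l)$ on $(-\infty,\xi_0]$, so for $c>0$ one has $z'\equiv 0$ and consequently $F(e^z)\equiv 0$ on that interval, which is impossible since $K_1^2/\zeta_l^2$ is not a root of $F$. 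The only non-routine step is the factorization of $H-H(\zeta_l)$ and the identification of the outer root as $K_1^2/\zeta_l^2$; the remainder is standard Lyapunov/phase-plane reasoning.
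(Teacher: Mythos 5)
Your proposal is correct and follows essentially the same route as the paper: both arguments multiply (\ref{ODE2z}) by $z'$, integrate from $-\infty$ to $\xi$, and exploit the explicit factorization $H(\zeta)-H(\zeta_l)=\frac{(\zeta-\zeta_l)^2(\zeta\zeta_l^2-K_1^2)}{2\zeta^2\zeta_l^2}$ (the paper's identity (\ref{eq:ineqif2})) to read off both bounds at once. Your separate upgrade to strict inequalities via uniqueness at the equilibrium is just a more explicit rendering of what the paper deduces directly from the strict negativity of the dissipation integral.
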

\begin{proof}
We first multiply equation (\ref{ODE2z}) with $z'$ and we integrate over $(-\infty,\xi)$ for some $\xi\in\mathbb{R}$. This leads to
\begin{equation}\label{eq:ineqif}
\int_{-\infty}^\xi F(e^z)z'~dy=-\frac{K_1^2}{6}(z'(\xi))^2-c\int_{-\infty}^y (z'(y))^2~dy<0\ .
\end{equation}
On the other hand, using (\ref{eq:functionF}) we get
\begin{equation}\label{eq:ineqif2}
\int_{-\infty}^\xi F(e^z)z'~dy=\frac{(\zeta_l-\zeta(\xi))^2[\zeta_l^2\zeta(\xi)-K_1^2]}{2\zeta_l^2\zeta^2(\xi)}<0\ .
\end{equation}

This implies that $\zeta(\xi)\not=\zeta_l$ for all $\xi\in\mathbb{R}$, and since $\zeta(\xi)\to\zeta_r>\zeta_l$ as $\xi\to\infty$ we have that $\zeta(\xi)>\zeta_l$. Moreover, from the inequality (\ref{eq:ineqif2}) we get that $\zeta(\xi)<K_1^2/\zeta_l^2$ for all $\xi\in \mathbb{R}$, and this completes the proof.
\end{proof}

The result of this lemma is aligned with the pattern of commonly observed undular bores. Next, we estimate the local extrema of the solution.

\begin{lemma}\label{lem:locextr}
Let $(\zeta,w)$ be a unique non-constant solution to (\ref{ODE2}) that satisfies the conditions (\ref{eq:limits})--(\ref{eq:lc}) and (\ref{eq:speed}). If $x_0\in\mathbb{R}$ is such that $\zeta'(x_0)=0$, then $x_0$ is an isolated extremum. Moreover,  $\zeta(x_0)$ is a local maximum if $\zeta(x_0)>\zeta_r$, while $\zeta(x_0)$ is a local minimum if $\zeta_l<\zeta(x_0)<\zeta_r$.
\end{lemma}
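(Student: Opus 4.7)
The plan is to pin down the sign of $\zeta''(x_0)$ directly from the ODE (\ref{ODE2z}) and then read off the nature of the extremum from the factorization (\ref{eq:ffact2}). Since $z = \ln\zeta$, we have $z' = \zeta'/\zeta$ and $z'' = \zeta''/\zeta - (\zeta'/\zeta)^2$. At any point $x_0$ with $\zeta'(x_0) = 0$, this yields $z'(x_0) = 0$ and $z''(x_0) = \zeta''(x_0)/\zeta(x_0)$, so (\ref{ODE2z}) collapses to
\[
\zeta''(x_0) \;=\; -\frac{3\,\zeta(x_0)}{K_1^2}\, F(\zeta(x_0)).
\]

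Next, I would invoke Lemma \ref{lem:bounds}, which gives $\zeta(x_0) > \zeta_l > 0$, so the factors $(\zeta(x_0) - \zeta_l)$ and $(\zeta(x_0) + \zeta_l + \zeta_r)$ in the factorization (\ref{eq:ffact2}) are strictly positive. The sign of $F(\zeta(x_0))$, and hence of $\zeta''(x_0)$, is therefore governed entirely by the sign of $\zeta(x_0) - \zeta_r$. The case $\zeta(x_0) > \zeta_r$ gives $F(\zeta(x_0)) > 0$, so $\zeta''(x_0) < 0$ and $x_0$ is a local maximum; the case $\zeta_l < \zeta(x_0) < \zeta_r$ gives $F(\zeta(x_0)) < 0$, so $\zeta''(x_0) > 0$ and $x_0$ is a local minimum.

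To close the argument I need to exclude the borderline case $\zeta(x_0) = \zeta_r$. The plan here is a phase-space uniqueness argument: in terms of the system (\ref{DynSys}), the conditions $\zeta(x_0) = \zeta_r$ and $\zeta'(x_0) = 0$ translate to $(z(x_0), v(x_0)) = (\ln\zeta_r, 0)$, which is a critical point of the vector field $\bG$. Uniqueness of solutions to the autonomous system (\ref{DynSys}) then forces $(z, v)$ to coincide with the constant equilibrium for all $\xi \in \mathbb{R}$, contradicting the hypothesis that $(\zeta, w)$ is non-constant.

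Once $\zeta''(x_0) \neq 0$ is established, isolation is automatic: $\zeta''$ keeps a constant nonzero sign on a neighborhood of $x_0$, so $\zeta'$ is strictly monotone there and vanishes only at $x_0$. I do not foresee any serious obstacle; the only delicate step is being explicit about using uniqueness for (\ref{DynSys}) to rule out the equilibrium case, because without that exclusion the sign analysis alone would permit the degenerate possibility $\zeta''(x_0) = 0$.
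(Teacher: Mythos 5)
Your proof is correct and follows essentially the same route as the paper: evaluate the ODE (\ref{ODE2z}) at a critical point to get $\zeta''(x_0)=-\tfrac{3\zeta(x_0)}{K_1^2}F(\zeta(x_0))$, read the sign of $F$ off the factorization together with the lower bound $\zeta>\zeta_l$ from Lemma \ref{lem:bounds}, and rule out the degenerate case $\zeta(x_0)=\zeta_r$. The only divergence is in that last step, where the paper argues that all higher derivatives of $z$ vanish at $x_0$ and hence $z$ is constant, whereas you invoke uniqueness for the autonomous system (\ref{DynSys}) at the equilibrium $(\ln\zeta_r,0)$ --- your version is the cleaner and more self-contained of the two.
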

\begin{proof}
Note that if $x_0$ is such that $\zeta'(x_0)=0$, then $z'(x_0)=0$ as well.  Moreover, we have that $z''(x_0)=\zeta''(x_0)e^{\zeta(x_0)}$. If $z''(x_0)=0$, then by (\ref{ODE2z}) we have $F(\zeta(x_0))=0$, which because of Lemma \ref{lem:bounds} can only be true if $\zeta(x_0)=\zeta_r$. Also in that case, we can see that after differentiation of (\ref{ODE2z}) all the derivatives of $z$ at $x_0$ are zero, and thus $z$ will be constant, which contradicts the hypothesis. Thus, $x_0$ should be an isolated extremum. 

If $x_0$ is a local maximum, then $z''(x_0)<0$, which reduces the equation (\ref{ODE2z}) to the inequality $F(\zeta)>0$. This implies that 
$$(\zeta(x_0)-\zeta_l)\left(\zeta^2(x_0)-\zeta(x_0)\zeta_l-2\frac{K_1^2}{\zeta_l}\right)>0\ ,$$
which means that $\zeta^2(x_0)-\zeta(x_0)\zeta_l-2\frac{K_1^2}{\zeta_l}>0$ since Lemma \ref{lem:bounds} excludes the possibility $\zeta(x_0)<\zeta_l$. Solving the quadratic inequality we obtain $\zeta(x_0)>\zeta_r$. If $\zeta(x_0)$ is a local minimum, then we obtain similarly that $\zeta_l<\zeta(x_0)<\zeta_r$.
\end{proof}

We turn now our attention to the energy of the system (\ref{DynSys}). Specifically, we define the energy of (\ref{DynSys}) via the Liapunov function 
\begin{equation}
\label{Hamqhd}
\begin{aligned}
H(z,v)=\frac{v^2}{2}+\Phi(z)\ ,
\end{aligned}    
\end{equation}
with $\Phi'(z)=\frac{3}{K_1^2}F(e^z)$. The potential (normalized with respect to $z_l$) can be written as
\begin{equation}\label{eq:potential}
    \Phi(z)=\frac{3}{K_1^2}\int_{z_l}^zF(e^z)~dz=-\frac{3}{K_1^2}\frac{(e^z - e^{z_l})^2 (K_1^2 - e^{z} e^{2z_l})}{2 e^{2z}e^{2z_l}}=-\frac{3}{K_1^2}\frac{(\zeta - \zeta_l)^2 (K_1^2 - \zeta \zeta_l^2)}{2 \zeta^2 \zeta_l^2}\ .
\end{equation} 
Figure \ref{fig:potential} depicts a typical graph of the function $\Phi$. The shape of $\Phi$ can be determined easily by studying its intervals of monotonicity using formula (\ref{eq:ffact2}) and its convexity using the formulas (\ref{eq:fderiv}), (\ref{eq:concav}) and (\ref{eq:convex}). For the specific figure we used $\zeta_l=1$, $w_l=0$ and $s=2$.

\begin{figure}[ht!]
  \centering
\includegraphics[width=\columnwidth]{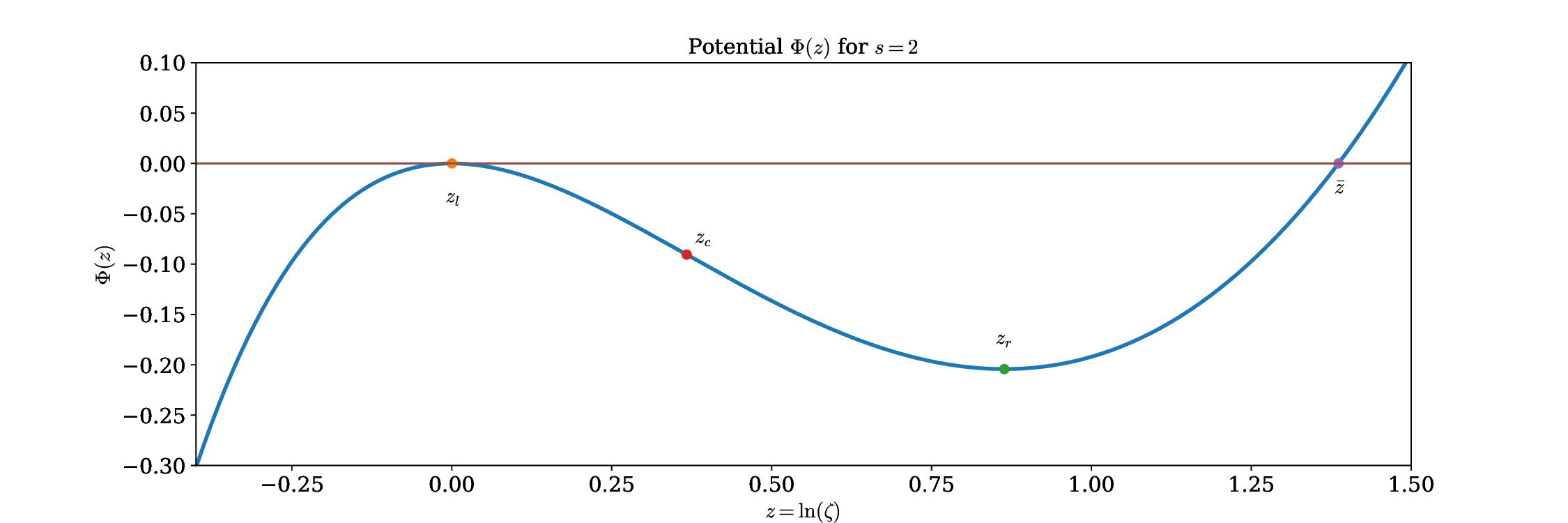}
  \caption{A typical potential function $\Phi(z)$}
  \label{fig:potential}
\end{figure}

With this definition for the energy, we conclude that the system (\ref{DynSys}) is dissipative in the sense that the Liapunov function $V(\xi)=H(z(\xi),v(\xi))$ is decreasing:
\begin{equation}\label{eq:dissipation}
\frac{d}{d\xi}V(\xi)=\frac{d}{d\xi}H(z,v)=-\frac{3c}{K_1^2} v^2<0\ .
\end{equation}
Thus, for $\xi\in\mathbb{R}$ we have that $V(\xi)<\lim_{\xi\to-\infty} V(\xi)$. This means that $H(z,0)<H(z_l,v_l)=H(z_l,0)$. This leads to the inequality
$$-\frac{(\zeta-\zeta_l)^2(K_1^2-\zeta\zeta_l^2)}{\zeta^2\zeta_l^2}<0\ .$$ Since $\zeta>\zeta_l$, we have that 
\begin{equation}
\zeta(\xi)<\bar{\zeta}=\left(\frac{K_1}{\zeta_l}\right)^2\ .
\end{equation}
(We will denote $\bar{z}=\ln\bar{\zeta}$).
Of course this was known from Lemma \ref{lem:bounds}. Thus, contrary to the Boussinesq system of \cite{BMT2023}, the information we can extract from the Liapunov function in terms of the behaviour of the maxima of the solution is the same as in the case of the KdV equation. Also asymptotically the solutions of the SGN equations do not concentrate at a maximum value $\bar{\zeta}=e^{\bar{z}}$. Moreover, the dissipation property (\ref{eq:dissipation}) of the Liapunov function $V$ is the reason for calling the new term in the SGN equations a dissipative term.

The potential function is defined for all $z\in \mathbb{R}$ but because $z=\ln\zeta$ with $\zeta>\zeta_l$ the potential will be used only with $z>z_l$. Obviously, the potential has two positive local extrema $z_l=\ln\zeta_l$ and $z_r=\ln\zeta_r$. 

\begin{lemma}\label{lem:inflection}
The potential function $\Phi(z)$ has a unique inflection point in the interval $[z_l,\bar{z}]$. 
\end{lemma}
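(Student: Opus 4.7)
The plan is to translate the problem from inflection points of $\Phi$ into zeros of $F'$, and then exploit the structure of the cubic polynomial that appears in the numerator of $F'(\zeta)$. Since $\Phi'(z) = \frac{3}{K_1^2} F(e^z)$, we immediately get
$$\Phi''(z) = \frac{3}{K_1^2} F'(e^z)\, e^z,$$
so inflection points of $\Phi$ in $[z_l, \bar z]$ correspond precisely to zeros of $F'(\zeta)$ in $[\zeta_l, \bar\zeta]$ where the sign of $F'$ changes.

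Next, from the formula (\ref{eq:fderiv}) we have $F'(\zeta) = P(\zeta)/(2\zeta^3)$ with
$$P(\zeta) = \zeta^3 + \frac{2K_1^2 + \zeta_l^3}{\zeta_l}\,\zeta - 4K_1^2.$$
Since the coefficient of $\zeta$ is strictly positive, $P'(\zeta) = 3\zeta^2 + \frac{2K_1^2+\zeta_l^3}{\zeta_l} > 0$ for all real $\zeta$, so $P$ is strictly increasing on $\mathbb{R}$ and therefore admits exactly one real root $\zeta_\star$.

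The next step is to localize $\zeta_\star$ in $(\zeta_l, \bar\zeta)$. By (\ref{eq:concav}), $F'(\zeta_l) < 0$, so $P(\zeta_l) < 0$; by (\ref{eq:convex}), $F'(\zeta_r) > 0$, so $P(\zeta_r) > 0$. The intermediate value theorem then places $\zeta_\star$ in $(\zeta_l, \zeta_r)$. It remains to check $\zeta_r < \bar\zeta = (K_1/\zeta_l)^2$, which follows from the Lax-induced inequality $K_1^2 > \zeta_l^3$: indeed, a short manipulation of (\ref{eq:spboundrelz}) reduces $\zeta_r < K_1^2/\zeta_l^2$ to $\sqrt{\zeta_l^2 + 8K_1^2/\zeta_l} < \zeta_l + 2K_1^2/\zeta_l^2$, and squaring this (both sides positive) yields the equivalent inequality $\zeta_l^3 < K_1^2$. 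Hence $\zeta_\star \in (\zeta_l, \zeta_r) \subset (\zeta_l, \bar\zeta)$, giving a unique $z_\star = \ln \zeta_\star \in (z_l, \bar z)$.

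Finally, since $P$ is strictly increasing with a simple root at $\zeta_\star$, $F'$ changes sign from negative to positive at $\zeta_\star$, so $\Phi''$ does the same at $z_\star$ and the point is a genuine inflection point. No step here looks delicate: the only small computation is the verification $\zeta_r < \bar\zeta$, which is the main place where the Lax condition is used quantitatively. Everything else reduces to monotonicity of a cubic with positive linear coefficient combined with the signs of $F'$ at $\zeta_l$ and $\zeta_r$ already derived in (\ref{eq:concav})--(\ref{eq:convex}).
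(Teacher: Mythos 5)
Your proof is correct and follows essentially the same route as the paper: both arguments hinge on the strict monotonicity of the cubic numerator $P(\zeta)=\zeta^3+\tfrac{2K_1^2+\zeta_l^3}{\zeta_l}\zeta-4K_1^2$, which forces $F'$ to have a single sign change. The only (harmless) difference is that you obtain existence via the intermediate value theorem from the sign computations (\ref{eq:concav})--(\ref{eq:convex}), whereas the paper invokes Rolle's theorem between the two zeros $F(\zeta_l)=F(\zeta_r)=0$; your explicit check that $\zeta_r<\bar{\zeta}$ is a small bonus the paper leaves implicit.
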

\begin{proof}
The existence of an inflection point $\zeta_c\in(\zeta_l,\zeta_r)$ follows by Rolle's theorem applied to $\Phi'(z)=3/K_1^2 F(e^z)$ and because $F(\zeta_l)=F(\zeta_r)=0$. The uniqueness of the inflection point $\zeta_c$ follows from the fact that the numerator of $F'(\zeta)$ in (\ref{eq:fderiv}) is for all $\zeta>\zeta_c$ $$P(\zeta)=\zeta^3+\frac{2K_1^2+\zeta_l^3}{\zeta_l}\zeta-4K_1^2>0\ .$$ 
To see this, note first that $P(\zeta_c)=0$. Then the derivative is $P'(\zeta)=3\zeta^2+\frac{2K_1^2+\zeta_l^3}{\zeta_l}>0$, which implies that $P(\zeta)$ is strictly increasing for $\zeta_l<\zeta<\bar{\zeta}$. Thus, $F'(\zeta)>0$ for $\zeta>\zeta_c$ and $F'(\zeta)<0$ for $0<\zeta<\zeta_c$, which proves the uniqueness.
\end{proof}

\begin{remark}\label{rem:inflection}
The inflection point $z_c$ of $\Phi(z)$ is the unique minimum of the function $F(e^z)$ in the interval $(z_l,\bar{z})$.
Thus $F(e^z)\geq F(e^{z_c})$ for all $z_l<z<\bar{z}$. We also have that $F(e^{z_c})=F(\zeta_c)<0$ since $\zeta_c\in (\zeta_l,\zeta_r)$. Moreover, we have $F(\zeta)<0$ for all $\zeta_l<\zeta<\zeta_r$ and $F(\zeta)>0$ for all $\zeta_r<\zeta<\bar{\zeta}$. Since also $F'(\zeta)>0$ for $\zeta>\zeta_c$, we have that $F$ has also a global maximum the point $(\bar{\zeta},F(\bar{\zeta}))$, i.e. $F(e^z)\leq F(e^{\bar{z}})$ for all $z_l<z<\bar{z}$, with $F(e^{\bar{z}})>0$. It is also noted that the point $z_c$ can be computed explicitely (using Mathematica\textsuperscript{\textregistered}) as $z_c=\ln\zeta_c$ with
$$
\zeta_c=\left[\frac{\sqrt[3]{9b+\sqrt{3(4a^3+27b^2)}}}{\sqrt[3]{18}}-\sqrt[3]{\frac{2}{3}}\frac{a}{\sqrt[3]{9b+\sqrt{3(4a^3+27b^2)}}}\right]\ ,
$$
where $a=(2 K_1^2 + \zeta_l^3)/\zeta_l$ and $b=4 K_1^2$.

\end{remark}

Next, we exclude the possibility of non-trivial periodic solutions.
\begin{lemma}\label{lem:nconstper}
The system (\ref{DynSys}) does not have non-trivial periodic solutions (including homoclinic orbits).
\end{lemma}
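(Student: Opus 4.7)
The plan is to exploit the strict dissipation of the Liapunov function $V(\xi) = H(z(\xi),v(\xi))$ established in equation (\ref{eq:dissipation}). Since periodic orbits and homoclinic orbits both force the solution to return to an earlier level set of $H$, while $H$ is strictly decreasing along any non-constant trajectory, both possibilities should be ruled out by a direct integration argument.

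First I would handle the periodic case. Suppose, for contradiction, that $(z(\xi), v(\xi))$ is a non-constant periodic solution of (\ref{DynSys}) with period $T>0$. Integrating the identity
\begin{equation*}
\frac{d}{d\xi}V(\xi) = -\frac{3c}{K_1^2}\, v(\xi)^2
\end{equation*}
over one period yields
\begin{equation*}
0 = V(T) - V(0) = -\frac{3c}{K_1^2}\int_0^T v(\xi)^2\,d\xi.
\end{equation*}
Since $c = s\varepsilon/\sqrt{\delta} > 0$, this forces $v \equiv 0$ on $[0,T]$, hence by the first equation of (\ref{DynSys}) we get $z' \equiv 0$, so $z$ is constant. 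This contradicts the assumption that the orbit is non-trivial.

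Next I would treat homoclinic orbits. Such an orbit connects a critical point to itself, so by Lemma \ref{lem:limits} and the earlier analysis the only available critical points are $(z_l, 0)$ and $(z_r, 0)$. In either case the orbit is defined on all of $\mathbb{R}$ with $(z(\xi),v(\xi)) \to (z_*, 0)$ as $\xi \to \pm \infty$. Integrating the dissipation identity over $\mathbb{R}$ now gives
\begin{equation*}
0 = \lim_{\xi\to +\infty} V(\xi) - \lim_{\xi\to -\infty} V(\xi) = -\frac{3c}{K_1^2}\int_{-\infty}^{\infty} v(\xi)^2\,d\xi,
\end{equation*}
which again forces $v \equiv 0$ and hence $z$ constant, contradicting the non-triviality of the orbit.

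The only delicate point is making sure the integrals above are finite and the endpoint values of $V$ are well defined; for a closed (periodic) orbit this is immediate by compactness, while for a homoclinic orbit it follows from the uniform convergence of $(z,v)$ to the critical point at $\pm\infty$ (so that $V(\xi)\to \Phi(z_*)$) together with monotonicity of $V$, which guarantees integrability of $v^2$. No further case analysis is required, so I do not anticipate any serious obstacle beyond presenting the dissipation argument cleanly.
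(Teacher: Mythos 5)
Your argument is correct, but it takes a different route from the paper. The paper proves this lemma in one line via the Bendixson--Dulac criterion: the divergence of the vector field, $\Div \bG(\bu) = -3c/K_1^2 < 0$, is of one sign throughout the phase plane, so no closed orbits can exist. You instead integrate the dissipation identity (\ref{eq:dissipation}) for the Liapunov function $V(\xi)=H(z(\xi),v(\xi))$ over a period (respectively over $\mathbb{R}$ for a homoclinic orbit) and conclude $v\equiv 0$. Both arguments are sound. Yours has the advantage of being entirely self-contained given the identity (\ref{eq:dissipation}) that the paper establishes just before the lemma, and of handling homoclinic orbits directly and explicitly (the standard statement of Bendixson--Dulac concerns closed orbits, and extending it to homoclinic loops requires applying Green's theorem to the region bounded by the loop, a point the paper glosses over). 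The paper's approach buys brevity by citing a standard theorem; your approach makes the mechanism of the exclusion (strict energy decay whenever $v\not\equiv 0$) transparent. Your closing remarks on finiteness of the integrals and well-definedness of the endpoint limits are the right technical points to flag, and your justifications for them are adequate.
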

\begin{proof}
The proof of this lemma is a direct application of the Bendixson-Dulac Theorem \cite{W2003}. Specifically, we have that $\Div \bG(\bu)=-3c/K_1^2<0$ for all $c>0$. Therefore, system (\ref{DynSys}) does not have closed orbits in its phase space.
\end{proof}

By the LaSalle invariance principle, \cite{W2003}, every bounded orbit $\mathcal{R}$ of (\ref{DynSys}) converges to the equilibrium point $(\zeta_l,w_l)$ as $\xi\to-\infty$. Moreover, when $c=0$, the system (\ref{DynSys}) is Hamiltonian and its Hamiltonian coincides with the Liapunov function $H$ of (\ref{Hamqhd}). In such case, it is known that the SGN system of equations possesses classical solitary waves as traveling wave solutions. These are homoclinic orbits, and their speed-amplitude relationship is $s=\sqrt{\bar{\zeta}}=K_1/\zeta_l$. Note that this relationship does not depend on $\delta$ or $\varepsilon$. In the case of the dissipative-dispersive shock waves, their amplitudes should vary between $\zeta_l$ and $(K_1/\zeta_l)^2$.

\begin{remark}
Let $(z,v)$ be a solution to (\ref{DynSys}). If $z(\xi)\to z_l$ and $v(\xi)\to 0$ as $\xi\to-\infty$, then from (\ref{DynSys}) we observe that $z'(\xi)$ and $v'(\xi)$ also tend to 0 as $\xi\to-\infty$. Using induction, we can show that $v^{(j)}(\xi)\to 0$ and $z^{(j+1)}(\xi)=v^{(j)}(\xi)\to 0$ as $\xi\to-\infty$.
\end{remark}

\section{Proof of existence of traveling wave solutions}\label{sec:existence}

Lemma \ref{lem:bounds} verifies that the solution, if it exists, it remains in the strip $\zeta_l<\zeta<(K_1/\zeta_l)^2$ for all $\xi\in\mathbb{R}$. In the dynamical systems language this means that the set 
$$\mathcal{M}=\{(z,v)\in\mathbb{R}^2 : z_l<z<2\ln(K_1/\zeta_l)\}\ ,$$ 
is an invariant set. Since there are no non-trivial periodic solutions to (\ref{DynSys}) due to Lemma \ref{lem:nconstper}, and because both $z'$ and $v'$ tend to 0 at infinity, we conclude that both the $\alpha$-limit set and the $\omega$-limit set must contain critical points of the system (\ref{DynSys}). Since these limit sets are connected by the orbit $\mathcal{R}$, they must each contain exactly one critical point and hence the orbit must tend asymptotically to a critical point both at $+\infty$ and $-\infty$. By Poincar\'{e}-Bendixson Theorem, any bounded orbit $\mathcal{R}$ of (\ref{DynSys}) should connect the critical points $(z_l,v_l)$ and $(z_r,v_r)$: As $\xi\to-\infty$ the orbit connects to $(z_l,v_l)$, while as $\xi\to\infty$ the orbit connects to $(z_r,v_r)$.

Because $(z_l,v_l)=(z_l,0)$ is a saddle point, there are two semi-orbits that converge to it as $\xi\to-\infty$ and approach the equilibrium point tangentially to the stable manifold determined by the slope
$$\lim_{\xi\to-\infty}\frac{v'(\xi)}{z'(\xi)}=\lim_{\xi\to-\infty}\frac{v(\xi)}{z(\xi)}=\lambda_{-}^l\ ,$$ where $\lambda_{-}^l$ as in (\ref{eq:eigs}). Thus, one semi-orbit approaches the saddle point from the region $Q_1=\{(z,v) : z>z_l,v>0\}$ and the other through the region $Q_2=\{(z,v) : z<z_l,v<0\}$. The last case with $z<z_l$ is excluded because of Lemma \ref{lem:bounds}. Thus, the only semi-orbit that can exist is the one with $z>z_l$.

Now we prove that system (\ref{DynSys}) has a unique (up to horizontal translations) solution that satisfies the asymptotic conditions (\ref{eq:limits}) and (\ref{eq:lc}).

\begin{theorem}\label{thm:exist}
Let $\delta>0$, $\varepsilon>0$ and $s>\sqrt{\zeta_l}+w_l/\zeta_l$ given constants. There is a unique, global solution $(u,v)$ of the system (\ref{DynSys}), which defines in the phase space a bounded orbit $\mathcal{R}\subset \mathcal{M}$ that tends to $(z_l,v_l)=(z_l,0)$ as $\xi\to-\infty$ and $(z_r,v_r)=(z_r,0)$ as $\xi\to\infty$, subject to the Lax condition (\ref{eq:lc}).
\end{theorem}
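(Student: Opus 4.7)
My plan is to construct the orbit $\mathcal{R}$ as the branch of the unstable manifold of the saddle $(z_l, 0)$ lying in the region $Q_1 = \{z > z_l,\ v > 0\}$, extend it globally using the Liapunov function $H$ from (\ref{Hamqhd}), and identify its forward limit via the Poincaré-Bendixson theorem together with Lemma \ref{lem:nconstper}.

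\textbf{Existence and global extension.} From (\ref{eq:concav}) we have $F'(\zeta_l) < 0$, so the characteristic equation (\ref{eq:characteristiceq}) at $(z_l, 0)$ produces real eigenvalues of opposite sign and this equilibrium is a hyperbolic saddle. The unstable manifold theorem then yields a $C^1$ one-dimensional unstable manifold whose two branches are tangent at $(z_l, 0)$ to the eigenvector of $\lambda^l_+ > 0$; exactly one branch enters $Q_1$, and I take $\mathcal{R}$ to be the orbit on this branch, which by construction satisfies $(z, v) \to (z_l, 0)$ as $\xi \to -\infty$. To obtain boundedness and global forward existence I exploit (\ref{eq:dissipation}): $V(\xi) = H(z(\xi), v(\xi))$ is strictly decreasing and $V(\xi) \to H(z_l, 0) = 0$ as $\xi \to -\infty$, hence $V(\xi) < 0$ for all finite $\xi$. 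Consequently $v^2(\xi) < -2\Phi(z(\xi))$, and because the set $\{\Phi \leq 0\} \cap \{z \geq z_l\}$ is the compact interval $[z_l, \bar{z}]$ (with $\Phi$ vanishing only at the endpoints by (\ref{eq:potential})), the orbit remains inside the compact trapping region $\{z_l \leq z \leq \bar{z},\ v^2 \leq -2\Phi(z)\}$. Standard ODE theory then extends the solution globally to $\xi \in \mathbb{R}$, and it stays inside the invariant set $\mathcal{M}$.

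\textbf{Forward limit.} Since the positive semi-orbit is bounded, its $\omega$-limit set is non-empty, compact, connected, and invariant. Lemma \ref{lem:nconstper} rules out non-trivial periodic orbits and homoclinic loops (both consequences of Bendixson-Dulac with $\Div\bG = -3c/K_1^2 < 0$), so the Poincaré-Bendixson trichotomy forces $\omega(\mathcal{R})$ to be a union of equilibria joined by heteroclinics. The only equilibria in the closure of $\mathcal{M}$ are $(z_l, 0)$ and $(z_r, 0)$, since the third root $\zeta_3 = -(\zeta_l + \zeta_r) < 0$ of $F$ is excluded, and by connectedness $\omega(\mathcal{R})$ is a single point. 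The saddle $(z_l, 0)$ is ruled out because $\omega(\mathcal{R}) = \{(z_l, 0)\}$ combined with the orbit emanating from $(z_l, 0)$ would give a homoclinic connection, contradicting Lemma \ref{lem:nconstper}. Therefore $\omega(\mathcal{R}) = \{(z_r, 0)\}$, and $\mathcal{R}$ realizes the heteroclinic connection demanded by the theorem.

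\textbf{Uniqueness and main difficulty.} Uniqueness up to $\xi$-translation follows because the unstable manifold at a hyperbolic saddle is a unique $C^1$ curve and only one of its branches lies in $\{z > z_l\}$ near the equilibrium; any solution of (\ref{DynSys}) satisfying (\ref{eq:limits})--(\ref{eq:lc}) must approach $(z_l, 0)$ tangent to this curve and so coincides with $\mathcal{R}$ up to a shift in $\xi$. The main obstacle I expect is the trapping step: without the Liapunov identity (\ref{eq:dissipation}) it is a priori conceivable that $z$ grows past $\bar{z}$ or $v$ blows up before the orbit reaches $(z_r, 0)$, and the fine structure of $\Phi$ from (\ref{eq:potential}) (its double zeros at $z_l$ and $\bar{z}$ with negativity in between) is exactly what confines the orbit to a compact sub-level set of $H$. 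Excluding the homoclinic alternative for the $\omega$-limit is also non-trivial, but is packaged cleanly in Lemma \ref{lem:nconstper}.
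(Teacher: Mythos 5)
Your proposal is correct and reaches the stated conclusion, but it handles the key boundedness step by a genuinely different route than the paper. The paper confines the orbit by two explicit barrier arguments on $v$: it shows $-2F(e^{\bar z})/c<v(\xi)<-2F(e^{z_c})/c$ via first-crossing contradictions that exploit the fact that $F(e^{\bar z})$ and $F(e^{z_c})$ are the maximum and minimum of $F$ on $(z_l,\bar z)$ (Remark \ref{rem:inflection}), and it imports the bound $z_l<z<\bar z$ from Lemma \ref{lem:bounds}. You instead trap the orbit in the sublevel set $\{H<0\}$ of the Liapunov function, using $V(-\infty)=H(z_l,0)=0$ together with the strict decrease (\ref{eq:dissipation}). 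Your route is shorter and avoids a mild circularity in the appeal to Lemma \ref{lem:bounds}, whose proof uses the limit of $\zeta$ at $+\infty$ --- something not yet available at that stage of an existence proof; the paper's route, on the other hand, produces the explicit bound (\ref{eq:vineq}), which is reused later in the proof of Theorem \ref{thm:regshockw}. One point you must make explicit: the set $\{H<0\}$ by itself is unbounded as $z\to-\infty$ (there $\Phi(z)\to-\infty$ by (\ref{eq:potential})), so the compactness of your trapping region is not automatic; it requires the observation that $H(z_l,v)=v^2/2\geq 0$ on the line $z=z_l$, so that the orbit --- connected, contained in $\{H<0\}$ for finite $\xi$, and entering $Q_1$ --- can never reach $z=z_l$ and hence stays in the component of $\{H<0\}$ inside the strip $z_l<z<\bar z$, where $v^2<-2\Phi(z)$ is bounded. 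With that line added, the confinement is complete. The remaining ingredients (unstable manifold of the hyperbolic saddle, Poincar\'e--Bendixson together with Lemma \ref{lem:nconstper} to identify the $\omega$-limit, exclusion of the homoclinic alternative, uniqueness up to translation from uniqueness of the unstable manifold) coincide with the paper's, which distributes them between the discussion preceding the theorem and the proof itself.
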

\begin{proof}
Let $(z(\xi),v(\xi))$ be a solution of (\ref{DynSys}) that corresponds to one of the two semi-orbits lying on the stable manifold of the saddle-point $(z_l,v_l)=(z_l,0)$, and assume that it is defined for at least large values of $\xi$ for the particular value of $s$. Such a solution exists since the function $\bG(\bu)$ is locally Lipschitz. Specifically, if $v\in\mathbb{R}$ and $z\in (z_l,2\ln(K_1/\zeta_l))$ we have that $\bG$ is continuously differentiable with
$$\nabla\bG(\bu^\ast)\bu=\left(v,~-\frac{3}{K_1^2}\left[ze^{z^\ast}-K_1^2\frac{2z}{e^{2z^\ast}}+(K_2-sK_1)\frac{z}{e^{z^\ast}}+cv \right] \right)^T\ .$$
The standard theory of ordinary differential equations guarantees that there is a unique (subject to horizontal translations) solution of system (\ref{DynSys}) for as long as the solution remains bounded. By Lemma \ref{lem:bounds} the orbit will remain in $\mathcal{M}$, and particularly we will have $z_l<z<\ln(K_1/\zeta_l)$. To complete the proof of global existence, it remains to show that $v$ remains bounded for all values of $\xi$. 

First we prove that $v(\xi)>-2F(e^{\bar{z}})/c$ for all $\xi\in\mathbb{R}$. Since $v(\xi)\to 0$ as $\xi\to-\infty$, we have that $v(\xi)>-2F(e^{\bar{z}})/c$ for $\xi<M$ for some $M<0$. Recall that from Remark \ref{rem:inflection}) we have that $F(e^{\bar{z}})>0$. For contradiction, assume that there is a minimal moment $\xi_0=\min \{\xi: v(\xi)=-\mu\leq -2F(e^{\bar{z}})/c\}$, then $v'(\xi_0)\leq 0$. On the other hand, taking into account the Remark \ref{rem:inflection} we have that $F(e^{\bar{z}})$ is the maximum value of the function $F(e^z)$ for $z_l<z<\bar{z}$. Therefore, from the second equation of (\ref{DynSys}) we have that
$$
\begin{aligned}
v'(\xi_0)&=-\frac{3}{K_1^2}F(e^{z(\xi_0)})-\frac{3c}{K_1^2}v(\xi_0) =-\frac{3}{K_1^2}F(e^{z(\xi_0)})+\frac{3c}{K_1^2}\mu\\
&\geq -\frac{3}{K_1^2}F(e^{\bar{z}})+2\frac{3}{k_1^2}F(e^{\bar{z}})=\frac{3}{K_1^2}F(e^{\bar{z}})>0\ .
\end{aligned}$$
This is a contradiction and thus $v(\xi)>-2F(e^{\bar{z}})/c$.

We will also prove that 
\begin{equation}\label{eq:vineq}
v(\xi)<-2F(e^{z_c})/c \quad \text{for all $\xi\in\mathbb{R}$}\ .
\end{equation} By the Remark \ref{rem:inflection} we have that $F(e^{z_c})<0$ is the minimum value of $F(e^z)$ for $z_l<z<\bar{z}$ and thus $-F(e^z)\leq -F(e^{z_c})$ for $z_l<z<\bar{z}$. Since $v(\xi)\to 0$ as $\xi\to-\infty$, we assume that there is a minimal moment $\xi_1$ such that $\xi_1=\min\{\xi: v(\xi)=\nu\geq -2F(e^{z_c})/c>0\}$. For that particular moment we will have $v'(\xi_1)\geq 0$ since $v(\xi)\to0$ as $\xi\to-\infty$. Again, from the second equation of (\ref{DynSys}) we have that
$$
\begin{aligned}
v'(\xi_1)&=-\frac{3}{K_1^2}F(e^{z(\xi_1)})-\frac{3c}{K_1^2}v(\xi_1) =-\frac{3}{K_1^2}F(e^{z(\xi_0)})-\frac{3c}{K_1^2}\nu\\
&\leq -\frac{3}{K_1^2}F(e^{z_c})+2\frac{3}{k_1^2}F(e^{z_c})=\frac{3}{K_1^2}F(e^{z_c})<0\ .
\end{aligned}$$
And this completes the proof of global existence.
\end{proof}

In the next section we study the shape of the traveling waves in detail.

\section{Oscillatory and regularized shock wave solutions}\label{sec:shapes}

In the previous section, we showed that given $\delta>0$, $\varepsilon>0$, $\zeta_l> 0$, and $w_l$, there is a unique (up to horizontal translations) bounded orbit $\mathcal{R}$ connecting the point $(\zeta_l,w_l)$ with the point $(\zeta_r,w_r)$ satisfying (\ref{eq:leftlims2}). In Section \ref{sec:dynamics}, we saw that the nature of the equilibrium point $(\zeta_r,w_r)$ can change depending on the choice of the parameters $\delta$, $\varepsilon$, $\zeta_l$, and $w_l$. Specifically, we saw that when $c^2< \tfrac{4}{3}K_1^2F'(\zeta_r)\zeta_r$, the equilibrium is a stable spiral; otherwise, it is a stable node. We will show that, indeed, in these cases, the traveling waves are either oscillatory shock waves or regularized shock waves, and we will investigate their profiles. We begin with the second case:

\begin{theorem}\label{thm:regshockw}
Let $z(\xi)$ be the unique solution to (\ref{ODE2z}) for values of $\varepsilon$, $\delta$, $s$, $\zeta_l$ and $w_l$ such that 
\begin{equation}
    \varepsilon^2\geq\frac{4\delta K_1^2 }{3s^2}F'(\zeta_r)\zeta_r\ .
\end{equation}
Then for all $\xi\in\mathbb{R}$, $z_l<z(\xi)<z_r$ and $z'(\xi)>0$. Moreover, $z(\xi)$ has a unique inflection point $\xi_1$ such that $(\xi-\xi_1)z''(\xi_1)<0$ for all $\xi\not=\xi_1$.
\end{theorem}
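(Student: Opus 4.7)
My plan is to prove the result in two stages: (i) establish the monotonicity $z'(\xi)>0$ (which together with the boundary values in (\ref{eq:limits}) gives $z_l<z(\xi)<z_r$), and (ii) locate a unique inflection point by reducing its characterization to a first-order linear ODE.

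For monotonicity I would argue by contradiction. By Theorem~\ref{thm:exist}, the orbit $(z(\xi),v(\xi))$ leaves the saddle $(z_l,0)$ along the unstable eigenvector with slope $\lambda_-^l>0$, so $v(\xi)>0$ for $\xi$ near $-\infty$. If $\xi_0$ is a first zero of $v$, then $v'(\xi_0)\le 0$, and the second equation of (\ref{DynSys}) gives $v'(\xi_0)=-\tfrac{3}{K_1^2}F(e^{z(\xi_0)})$, forcing $F(e^{z(\xi_0)})\ge 0$. By (\ref{eq:ffact2}) and the exclusion of $z(\xi_0)=z_r$ (uniqueness of solutions through the equilibrium), this yields an overshoot $z(\xi_0)>z_r$. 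To rule this out, construct a positively invariant region
\[
W=\{(z,v):z_l\le z\le z_r,\ 0\le v\le \phi(z)\},
\]
with $\phi(z_l)=\phi(z_r)=0$ and $\phi>0$ on $(z_l,z_r)$. Asking that $\bG$ point inward on the upper boundary $v=\phi(z)$ reduces, for the linear ansatz $\phi(z)=m(z_r-z)$, to
\[
\frac{-F(e^z)}{z_r-z}\,\le\,3cm-K_1^2 m^2,
\]
whose right-hand side is maximised at $m=\tfrac{3c}{2K_1^2}$ with maximum $\tfrac{3c^2}{4K_1^2}$. A Taylor expansion at $z_r$ gives $\lim_{z\to z_r}\tfrac{-F(e^z)}{z_r-z}=F'(\zeta_r)\zeta_r$, so the limiting inequality at $z_r$ is exactly the nodal hypothesis $\varepsilon^2\ge\tfrac{4\delta K_1^2}{3s^2}F'(\zeta_r)\zeta_r$. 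Granted the uniform version, $\mathcal{R}$ enters $W$ near the saddle (the unstable slope satisfies $\lambda_-^l\le |F'(\zeta_l)|\zeta_l/c$, by squaring the quadratic defining $\lambda_-^l$) and is therefore trapped in $W$, contradicting $z(\xi_0)>z_r$. Hence $v>0$ throughout, $z$ is strictly increasing, and $z_l<z(\xi)<z_r$.

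For the inflection point, set $\psi(\xi):=v(\xi)+F(e^{z(\xi)})/c$, so $z''(\xi)=v'(\xi)=-\tfrac{3c}{K_1^2}\psi(\xi)$ and the inflections are the zeros of $\psi$. Differentiating and using (\ref{DynSys}) yields
\[
\psi'(\xi)+\frac{3c}{K_1^2}\psi(\xi)=\frac{F'(\zeta(\xi))\zeta(\xi)}{c}\,v(\xi),
\]
whose right-hand side has the sign of $F'(\zeta)$: negative on $(\zeta_l,\zeta_c)$ and positive on $(\zeta_c,\zeta_r)$, by Remark~\ref{rem:inflection}. Since $z$ is strictly increasing there is a unique $\xi_c$ with $z(\xi_c)=z_c$. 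Setting $\Phi(\xi):=e^{3c\xi/K_1^2}\psi(\xi)$, one obtains $\Phi(-\infty)=0$, $\Phi'<0$ on $(-\infty,\xi_c)$, and $\Phi'>0$ on $(\xi_c,\infty)$. Expanding the orbit near the node along its slow eigenvector $(1,\lambda_-^r)$ gives $\psi(\xi)\sim \kappa e^{\lambda_-^r \xi}$ with $\kappa>0$ (the positivity reduces to $(1-\sqrt{1-\alpha})^2\ge 0$ with $\alpha=\tfrac{4K_1^2 F'(\zeta_r)\zeta_r}{3c^2}$), and $\tfrac{3c}{K_1^2}+\lambda_-^r=-\lambda_+^r>0$ yields $\Phi(\xi)\to+\infty$. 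Therefore $\Phi$, and hence $\psi$, has exactly one zero $\xi_1\in(\xi_c,\infty)$ with $\psi<0$ on $(-\infty,\xi_1)$ and $\psi>0$ on $(\xi_1,\infty)$. Translating via $z''=-\tfrac{3c}{K_1^2}\psi$ gives $(\xi-\xi_1)z''(\xi)<0$ for $\xi\ne\xi_1$, the convex-then-concave shape claimed.

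The main obstacle is proving the global invariance of $W$. The pointwise condition at $z_r$ matches the nodal hypothesis exactly, but the linear envelope $\phi(z)=m(z_r-z)$ is not uniformly tight: numerical checks for parameter regimes with $K_1^2/\zeta_l^3$ large show that $\tfrac{-F(e^z)}{z_r-z}$ may exceed $F'(\zeta_r)\zeta_r$ at interior points. The fix I would pursue is a concave or piecewise envelope $\phi$, tangent at $(z_r,0)$ to the slow eigenvector of the node and lying above the unstable eigendirection of the saddle at $(z_l,0)$; compatibility of the two tangency conditions follows from the elementary bound $\lambda_-^l\le|F'(\zeta_l)|\zeta_l/c$ already used above.
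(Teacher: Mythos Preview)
Your monotonicity argument has the gap you yourself identify, and the proposed repair (a concave or piecewise envelope) is unnecessary. The paper closes the invariance of the triangular region by a different and simpler route: it takes the slope of the upper edge to be exactly $m=\lambda_-^r$ (the slow eigenvalue at the node), \emph{not} the optimiser $3c/(2K_1^2)$. With this choice the characteristic equation $m^2+\tfrac{3c}{K_1^2}m+\tfrac{3}{K_1^2}F'(\zeta_r)\zeta_r=0$ converts the inward-pointing condition on $\ell_1$ into the inequality
\[
\frac{F(e^{z})-F(e^{z_r})}{z-z_r}\ <\ F'(\zeta_r)\,\zeta_r\qquad(z_l<z<z_r),
\]
which follows from the mean value theorem applied to $g(z)=F(e^z)$ together with the convexity of $g$ on $(z_l,z_r)$ (equivalently $F''(\zeta)>0$ there, verified directly from the explicit formula for $F''$). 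This is the structural insight you are missing: optimising over $m$ throws away the algebraic identity coming from the characteristic polynomial, whereas fixing $m=\lambda_-^r$ reduces the global invariance check to a one-line convexity estimate. No piecewise construction is needed.

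Your treatment of the inflection point, via the auxiliary function $\psi=v+F(e^z)/c$ and the integrating factor $e^{3c\xi/K_1^2}$, is a genuinely different argument from the paper's. The paper proceeds by a case analysis on the sign of $F'(\zeta)$ at any critical point of $v$ (local max if $z>z_c$, local min if $z<z_c$, impossible if $z=z_c$) and then rules out multiple extrema by the strict monotonicity of $z$. Your approach packages the same dichotomy into the sign of the forcing term $F'(\zeta)\zeta v/c$ in the linear ODE for $\psi$, which is arguably cleaner; however, the endpoint analysis at $+\infty$ (showing $\Phi\to+\infty$) relies on the orbit approaching along the \emph{slow} eigendirection, which needs justification in the degenerate case $\lambda_+^r=\lambda_-^r$ (the borderline $\varepsilon^2=\tfrac{4\delta K_1^2}{3s^2}F'(\zeta_r)\zeta_r$). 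The paper's case analysis avoids this asymptotic subtlety.
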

\begin{proof}
Let $(z,v)$ be the corresponding solution of the system (\ref{DynSys}) under the assumptions of the Theorem \ref{thm:exist}. To prove that $z'(\xi)>0$ for the particular solution, it is sufficient to show that $z(\xi)>z_l$ and $v(\xi)>0$ since $v(\xi)=\zeta'(\xi)$. Thus, the orbit $\mathcal{R}$ should remain in the region $Q_1=Q_1=\{(z,v) : z>z_l,v>0\}$. 

We have already shown that $z_l<z(\xi)<\bar{z}$. It remains to show that the orbit $\mathcal{R}$ cannot exit the semi-infinite strip
$$\mathcal{S}=\{(z,v)~:~v>0, ~z_l<z<z_r \}\ ,$$
where $\zeta_r$ as in (\ref{eq:leftlims2}). 

We will prove a more useful enclosure of the orbit $\mathcal{R}$ inside the triangle $\mathcal{T}$ defined by the line segments
$$\ell_0=\{(z,v)~ : ~ v=0,~ z_l\leq z\leq z_r\}, \quad \ell_1=\{(z,v)~:~ v=m(z-z_r),~z_l\leq z\leq z_r\}\ ,$$
and
$$\ell_2=\{(z,v)~:~ 0\leq v\leq m(z_l-z_r),~ z = z_l \}\ ,$$
with $$m=\lambda_{-}^r<0\ .$$
In other words we will show that the domain enclosed by the triangle $\mathcal{T}$ is a trapping region \cite{W2003}, cf. Figure \ref{fig:phase}.

\begin{figure}[ht!]
  \centering
\includegraphics[width=\columnwidth]{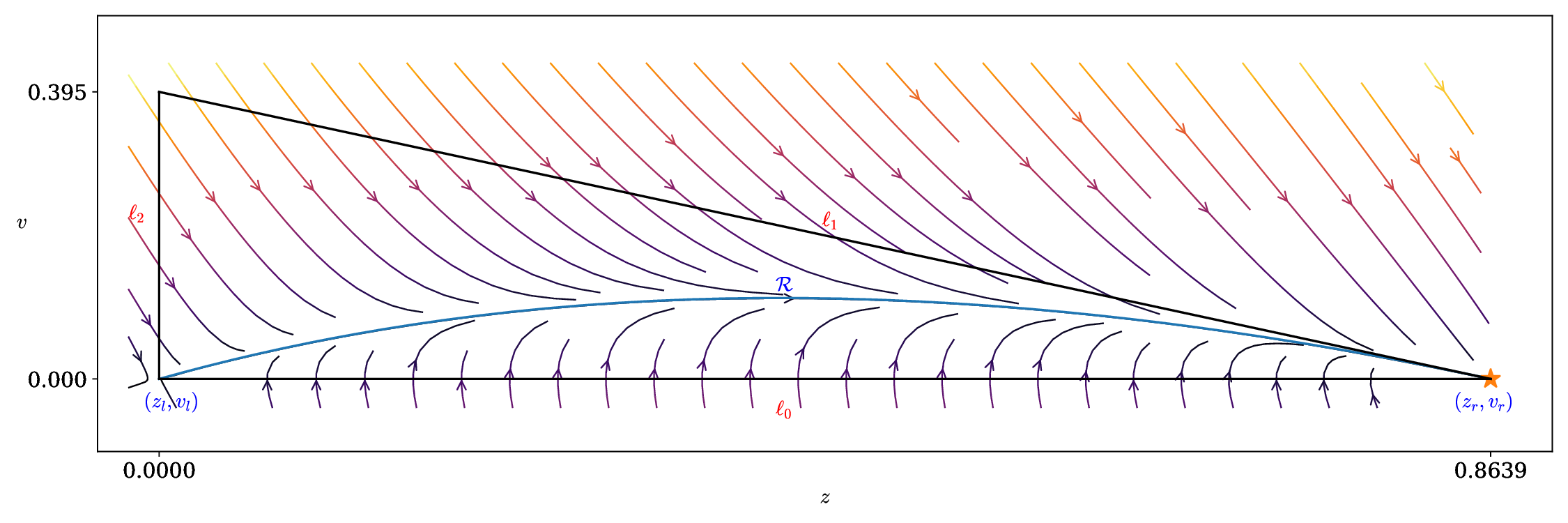}
  \caption{The phase-space and the corresponding streamlines of the flow for a dissipative dominated flow ($s=2$, $\zeta_l=1$, $w_l=0$, $\varepsilon=0.3$, $\delta=0.02$)}
  \label{fig:phase}
\end{figure}

We have already excluded the possibility of the orbit $\mathcal{R}$ to intersect the line $\ell_2$ since $z_l<z$. We will next show that the orbit $\mathcal{R}$ does not intersect the line segment $\ell_0$. Since the orbit $\mathcal{R}$ approaches the point $(z_l,0)$ from the region $Q_1=\{(z,v)~:~z>z_l, ~v>0 \}$ as $\xi\to-\infty$, we assume (for contradiction) that the orbit $\mathcal{R}$ intersects the segment $\ell_0$, and that the $\xi_0$ is the smallest value $\xi\in\mathbb{R}$ such that $v(\xi_0)=0$ and $z_l<z(\xi_0)< z_r$, and $v(\xi)>0$ for all $\xi<\xi_0$. (For $\xi>\xi_0$ the function $v(\xi)$ can be anything, positive-negative-zero). This means that $v'(\xi_0)=z''(\xi_0)\leq 0$ ($z$ should be concave down). From the second equation of (\ref{DynSys}), for $z_l<z(\xi_0)<z_r$ we have 
\begin{equation}\label{eq:vderiv}
v'(\xi_0)=-\frac{3}{K_1^2}F(e^{z(\xi_0)})=-\frac{3}{K_1^2}\frac{(\zeta(\xi_0)-\zeta_l)(\zeta(\xi_0)-\zeta_r)(\zeta(\xi_0)+\zeta_l+\zeta_r)}{2\zeta^2(\xi_0)}>0 ,
\end{equation}
which contradicts with the assumption of the existence of the point $\xi_0$ and thus the orbit $\mathcal{R}$ cannot intersect the segment $\ell_0$.

We will now prove that the orbit $\mathcal{R}$ cannot intersect also the segment $\ell_1$. We know that the orbit is inside the triangle $\mathcal{T}$ for $\xi\to-\infty$ since it approaches $(z_l,0)$ through the region $Q_1$. Assume for contradiction that the orbit $\mathcal{R}$ intersects the segment $\ell_1$ and let $\xi_0$ be the smallest value such that $(z(\xi_0),v(\xi_0))\in\ell_1$. It follows that $z_l< z(\xi_0)< z_r$ and (since $m<0$)
$$z'(\xi_0)=v(\xi_0)=m(z(\xi_0)-z_r)>0\ .$$
Thus, the orbit $\mathcal{R}$ at the point of intersection must have $v'(\xi_0)/z'(\xi_0)\geq m$, because otherwise $z'(\xi_0)<0$. Dividing the equations of (\ref{DynSys}) we have
$$\begin{aligned}
m\leq \frac{v'(\xi_0)}{z'(\xi_0)} &=-\frac{3}{K_1^2}\frac{F(e^{z(\xi_0)})+c v(\xi_0)}{v(\xi_0)}\\
&= -\frac{3}{K_1^2}\left[\frac{F(e^{z(\xi_0)})}{v(\xi_0)} +c \right] \\
&=-\frac{3}{K_1^2}\left[\frac{F(e^{z(\xi_0)})}{m(z(\xi_0)-z_r)} +c \right]\ .
\end{aligned}$$
Equivalently, since $m=\lambda_{-}^r$ and from the characteristic equation (\ref{eq:characteristiceq}) we have
$$ 0\leq m^2+\frac{3c}{K_1^2} m+\frac{3}{K_1^2}\frac{F(e^{z(\xi_0)})}{z(\xi_0)-z_r}=\frac{3}{K_1^2}\left[\frac{F(e^{z(\xi_0)})}{z(\xi_0)-z_r}-F'(e^{z_r})e^{z_r}\right]\ .$$
On the other hand, we have
$$
\begin{aligned}
0<\frac{F(e^{z(\xi_0)})}{z(\xi_0)-z_r} &= \frac{F(e^{z(\xi_0)})-F(e^{z_r})}{z(\xi_0)-z_r}\\
&=F'(e^\psi)e^\psi \quad \text{for some $z(\xi_0)<\psi<z_r$ by the mean-value theorem}\\
&<F'(e^{z_r})e^{z_r}\ .
\end{aligned}
$$
To see the last inequality, first compute the second derivative of $F$
$$F''(\zeta)=-\frac{2 K_1^2(\zeta-3\zeta_l)+\zeta\zeta_l^3}{\zeta^4\zeta_l}=\frac{3\zeta_l\zeta_r(\zeta_l+\zeta_r)-\zeta(\zeta_l^2+\zeta_l\zeta_r+\zeta_r^2)}{\zeta^4}\ .$$
This implies that $F''(\zeta_l)>0$ and $F''(\zeta_r)>0$.
Moreover, $F''(\zeta)=0$ has only the root
$$
\tilde{\zeta}=\frac{3\zeta_l\zeta_r(\zeta_l+\zeta_r)}{\zeta_l^2+\zeta_l\zeta_r+\zeta_r^2}\ ,$$ and is continuous.
Thus $F''(\zeta)>0$ for all $\zeta_l<\zeta<\zeta_r$ and thus $F'$ is increasing in $(\zeta_l,\zeta_r)$.

The fact that the triangle $\mathcal{T}$ is a trapping region can also be inferred by studying the flow through its edges. As can be observed in Figure \ref{fig:phase}, the triangle is indeed a trapping region, and no orbit can escape but rather converges to $(z_l,0)$ through the region $Q_1$. Of course, the detailed analysis of the flow requires arguments similar to those presented earlier and is omitted.

Since we proved that $z(\xi)$ is increasing from the point $(z_l,0)$ to $(z_r,0)$, and $v(\xi)>0$ for all $\xi\in\mathbb{R}$ and $v(\xi)\to 0$ as $\xi\to\pm\infty$, it is implied by the mean value theorem that there is a $\xi_1\in\mathbb{R}$ such that $v'(\xi_1)=z''(\xi_1)=0$. If $v'(\xi)=0$, then the second equation of (\ref{DynSys}) yields 
$$v''(\xi)=-\frac{3}{K_1^2}F'(e^{z(\xi)})e^{z(\xi)}z'(\xi)\ .$$
Since $z_l<z(\xi)<z_r$ and $z'(\xi)>0$ for all $\xi$, we have three possibilities:
\begin{enumerate}
    \item [(i)] If $z(\xi)<z_c$, then $F'(e^{z(\xi)})<0$, i.e. $v''(\xi)>0$ and thus $\xi_1$ is a strict local minimum of $v$
    \item [(ii)] If $z(\xi)>z_c$, then $F'(e^{z(\xi)})>0$, i.e. $v''(\xi)<0$ and thus $\xi_1$ is a strict local maximum of $v$
    \item [(iii)] If $z(\xi)=z_c$, then $F'(e^{z(\xi)})=0$, i.e. $v''(\xi)=0$ and thus $\xi_1$ is a saddle point of $v$
\end{enumerate}
Assume that $\xi_1$ is a maximum point of $v$ and also for contradiction assume that there is another local maximum point at $\xi_1'\not=\xi_1$. Without loss of generality we assume that $\xi_1'<\xi_1$. By the mean-value theorem we have that there is a local minimum $\bar{\xi}$ with  $\xi_1'<\bar{\xi}<\xi_1$. Since $z(\xi)$ is increasing, we have $z_c<z(\xi'_1)<z(\bar{\xi})<z(\xi_1)$ (by (ii) since $\xi_1'$ is a local maximum), which contradicts with statement (i) which ensures that the local minimum will be $z(\bar{\xi})<z_c$. If $z(\xi)<z_c$, this means that $\xi$ is strictly local minimum and $z'(\xi)=v(\xi)>0$. Since $v(\xi)\to 0$ as $\xi\to-\infty$, there will be another local maximum $\xi_1''<\xi$ with $z(\xi_1'')<z(\xi)<z_c$ which contradicts with the statement (ii) and ensures that $z(\xi_1'')>z_c$. Finally, if $v'(\xi)=0$ and $z(\xi)=z_c$, then from the second equation of (\ref{DynSys}) we have that
$$v(\xi)=-\frac{F(e^{z_c})}{c}>0\ ,$$
but this is in contrast with (\ref{eq:vineq}) (see the proof of Theorem \ref{thm:exist}) where we showed that $v(\xi)<-\frac{F(e^{z_c})}{c}$ for all $\xi\in\mathbb{R}$, and thus $\xi$ cannot be a saddle point. Hence, it can only be $v'(\xi_1)=z''(\xi_1)=0$ and $z_c<z(\xi_1)<z_r$. Since $v(\xi)>0$ for all $\xi$ and $v(\xi)\to 0$ as $\xi\to \pm\infty$, then $v'(\xi)>0$ for $\xi<\xi_1$ and $v'(\xi)<0$ for $\xi>\xi_1$. Therefore, there is a unique inflection point $\xi_1$ of $z$ such that $(\xi-\xi_1)z''(\xi)<0$ for $\xi\not=\xi_1$.
\end{proof}

We turn now our attention to the regime where the dispersion dominates the dissipation with $$\varepsilon^2<\frac{4\delta K_1^2}{3c^2}F'(\zeta_r)\zeta_r\ .$$ We call this regime, the regime of moderate dispersion. In such case the traveling wave consists of decreasing undulations as $\xi\to\infty$ and is lead by a traveling front. Specifically, we have the following theorem:

\begin{theorem}
Let $z(\xi)$ be the unique (up to horizontal translations) solution to (\ref{DynSys}) and $\varepsilon$ is such that $\varepsilon^2<4\delta K_1^2F'(\zeta_r)\zeta_r/3c^2$. Then for all $\xi\in\mathbb{R}$ we have that $z(\xi)>z_l$ and:
\begin{enumerate}
    \item[(a)] If $M_0=\sup_\xi z(\xi)$, then $M_0$ is attained at a unique value $\xi=\chi_0$, and for $\xi<\chi_0$, $z'(\xi)>0$.
    \item[(b)] There is $\xi_0<\chi_0$ such that $(\xi-\xi_0)z''(\xi)<0$, for all $\xi<\chi_0$, $\xi\not=\xi_0$.
    \item[(c)] The solution $z(\xi)$ has an infinte number of local maxima and minima. These are taken on at points $\{\chi_i\}_{i=0}^\infty$ and $\{\omega_i\}_{i=0}^\infty$ where $\chi_i<\omega_{i+1}<\chi_{i+1}$ for all $i\geq 0$, and $\lim_{i\to\infty} \chi_i=\lim_{i\to\infty}\omega_i=\infty$. Moreover, $z_r<z(\chi_{i+1})<z(\chi_i)$ and $z_r>z(\omega_{i+1})>z(\omega_i)$.
\end{enumerate}
\end{theorem}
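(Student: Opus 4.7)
The plan is to combine the global picture from Theorem \ref{thm:exist} with a phase-plane analysis around the stable spiral $(z_r,0)$. Under the moderate-dispersion hypothesis $\varepsilon^2<\tfrac{4\delta K_1^2}{3s^2}F'(\zeta_r)\zeta_r$, equivalently $c^2<\tfrac{4}{3}K_1^2 F'(\zeta_r)\zeta_r$, the eigenvalues (\ref{eq:eigs}) at $(z_r,0)$ are complex with negative real part, so every orbit converging to this point must wind around it infinitely often, and the monotone decay of the Liapunov function $V(\xi)=H(z(\xi),v(\xi))$ from (\ref{eq:dissipation}) will pin down the heights of the successive extrema. For part (a) I would first note that $\mathcal{R}$ leaves the saddle $(z_l,0)$ tangent to the unstable eigenvector in $\{z>z_l,\,v>0\}$ (Lemma \ref{lem:bounds} forbids $\{z<z_l\}$), so $v>0$ near $-\infty$. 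Let $\chi_0$ be the first zero of $v$ (which exists since the orbit winds around $(z_r,0)$). On $(-\infty,\chi_0)$ we have $v>0$, so $z'>0$ and $z$ is strictly increasing. At $\chi_0$ the fact that $v$ decreases to $0$ gives $v'(\chi_0)\le 0$, and the second equation of (\ref{DynSys}) yields $v'(\chi_0)=-\tfrac{3}{K_1^2}F(e^{z(\chi_0)})$, so $F(e^{z(\chi_0)})\ge 0$ and hence $z(\chi_0)\ge z_r$; equality is ruled out by ODE uniqueness (otherwise $\mathcal{R}\equiv(z_r,0)$). Global maximality $M_0=z(\chi_0)$ then follows from $V(\xi)<V(\chi_0)=\Phi(z(\chi_0))$ for $\xi>\chi_0$ together with the strict monotonicity of $\Phi$ on $(z_r,\bar z)$.

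For part (b), $v(-\infty)=v(\chi_0)=0$ and $v>0$ in between, so $v$ has at least one critical point $\xi_0\in(-\infty,\chi_0)$, producing an inflection $z''(\xi_0)=v'(\xi_0)=0$. Uniqueness is the delicate point. Differentiating the second equation of (\ref{DynSys}), at any critical point $\xi_*$ of $v$ one obtains
\[
v''(\xi_*)=-\tfrac{3}{K_1^2}F'(e^{z(\xi_*)})e^{z(\xi_*)}v(\xi_*).
\]
By Lemma \ref{lem:inflection} and Remark \ref{rem:inflection}, $F'(e^z)$ is negative for $z<z_c$ and positive for $z>z_c$, so a critical point of $v$ is a strict maximum when $z(\xi_*)>z_c$ and a strict minimum when $z(\xi_*)<z_c$. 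Since $v$ is increasing near $-\infty$, the first critical point of $v$ must be a maximum, forcing $z(\xi_0)>z_c$; any further critical point would have to be a minimum at $z<z_c$, but monotonicity of $z$ on $(-\infty,\chi_0)$ keeps $z>z_c$ thereafter, a contradiction. The signs $v'>0$ on $(-\infty,\xi_0)$ and $v'<0$ on $(\xi_0,\chi_0)$ then yield $(\xi-\xi_0)z''(\xi)<0$ for $\xi\in(-\infty,\chi_0)\setminus\{\xi_0\}$.

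For part (c), I would iterate past $\chi_0$: on $(\chi_0,\omega_1)$ one has $v<0$ until the next zero $\omega_1$, and exactly as in part (a), $v'(\omega_1)\ge 0$ forces $F(e^{z(\omega_1)})\le 0$ and, via uniqueness, $z(\omega_1)<z_r$, a strict local minimum. Continuing inductively one obtains interleaved sequences $\{\chi_i\}$ of maxima with $z(\chi_i)>z_r$ and $\{\omega_i\}$ of minima with $z(\omega_i)<z_r$, in the ordering $\chi_i<\omega_{i+1}<\chi_{i+1}$. Since $V$ is strictly decreasing on every interval where $v\not\equiv 0$, the values $V(\chi_i)=\Phi(z(\chi_i))$ and $V(\omega_i)=\Phi(z(\omega_i))$ strictly decrease with $i$; strict monotonicity of $\Phi$ on $(z_r,\bar z)$ and on $(z_l,z_r)$ converts this into $z_r<z(\chi_{i+1})<z(\chi_i)$ and $z_r>z(\omega_{i+1})>z(\omega_i)$. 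That $\chi_i,\omega_i\to+\infty$ (so the sequence is infinite) again follows from ODE uniqueness: any finite accumulation point $\xi^*$ would give $(z(\xi^*),v(\xi^*))=(z_r,0)$ and force $\mathcal{R}\equiv(z_r,0)$.

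The main obstacle I foresee is the very first input to the whole argument, namely proving that the orbit $\mathcal{R}$ truly \emph{spirals} around $(z_r,0)$ so that the first zero $\chi_0$ of $v$ exists and infinitely many more follow. A clean route is to invoke Hartman-Grobman in a small neighborhood of $(z_r,0)$ together with LaSalle's invariance principle — the latter (already used after Lemma \ref{lem:nconstper}) ensures that $\mathcal{R}$ enters any such neighborhood, and topological conjugacy with the linear underdamped oscillator then forces the winding number to be infinite. A more elementary alternative is to track directly the sign of $\bG$ in the four sectors cut out by $\{z=z_r\}$ and $\{v=0\}$, which makes the transverse crossings of $\{v=0\}$ explicit at the cost of more bookkeeping.
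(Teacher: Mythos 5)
Your proposal is correct and follows essentially the same route as the paper's proof: the sign computation $v'=-\tfrac{3}{K_1^2}F(e^{z})>0$ on the segment $\{v=0,\ z_l<z<z_r\}$ to force the first extremum above $z_r$, the identity $v''(\xi_*)=-\tfrac{3}{K_1^2}F'(e^{z(\xi_*)})e^{z(\xi_*)}v(\xi_*)$ at critical points of $v$ to isolate the single inflection in part (b), and the spiral nature of $(z_r,0)$ (the paper invokes the asymptotic representation (\ref{eq:harman}) from Hartman, which is exactly the Hartman--Grobman route you flag as the main obstacle) to generate the infinite string of extrema. The one genuinely different ingredient is your treatment of the orderings: for the global maximality of $z(\chi_0)$ in (a) and for $z_r<z(\chi_{i+1})<z(\chi_i)$, $z(\omega_i)<z(\omega_{i+1})<z_r$ in (c), you use the strict decay (\ref{eq:dissipation}) of the Liapunov function combined with the strict monotonicity of $\Phi$ on $(z_l,z_r)$ and on $(z_r,\bar z)$, whereas the paper argues via Lemma \ref{lem:locextr} and the impossibility of a planar autonomous orbit intersecting itself. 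Both are sound; your Liapunov version is slightly more quantitative and settles explicitly that the supremum is attained only at $\chi_0$, a point the paper leaves implicit. One small step to patch (glossed over in the paper as well): in (b) you assert that the first critical point of $v$ is a strict maximum with $z(\xi_0)>z_c$, which tacitly excludes a degenerate critical point sitting exactly at $z=z_c$, where $v''=0$ and the max/min dichotomy is silent; a short extra computation, e.g.\ $v'''(\xi_*)=-\tfrac{3}{K_1^2}F''(e^{z_c})e^{2z_c}v(\xi_*)^2<0$, shows $v'$ would then have a strict local maximum equal to zero at $\xi_*$, forcing $v'<0$ just before $\xi_*$, which is incompatible with $\xi_*$ being the first critical point of an initially increasing $v$.
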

\begin{proof}
The fact that the traveling wave is $z(\xi)>z_l$ was shown in Lemma \ref{lem:bounds}. 
Since $(z_r,0)$ is a spiral point in the case of $\varepsilon^2<4\delta K_1^2F'(\zeta_r)\zeta_r/3s^2$ we expect that the solution will be oscillatory as $\xi\to +\infty$, and in particular of the form
\begin{equation}\label{eq:harman}
(z(\xi)-z_l,v(\xi))=Ce^{\mu \xi}(\cos(\nu\xi+\theta_0+o(1)),\sin(\nu\xi+\theta_0+o(1)))\ ,
\end{equation}
as $\xi\to+\infty$, where $\mu={\rm Re} (\lambda^r_+)$ and $\nu={\rm Im}(\lambda^r_+)\not=0$, with $\lambda^r_+$ as defined in (\ref{eq:eigs}), while $C$ and $\theta_0$ depend on the solution \cite{Hartman2002}. 
Therefore, there are infinite many points where $v(\xi)$ vanishes. According to Lemma \ref{lem:locextr} these points are isolated and are strict local maxima or minima of the solution $z$. Moreover, according to the same lemma, a local maximum corresponds to a point $\xi$ where $z(\xi)>z_l$, and a local minimum to a point $\xi$ where $z_l<z(\xi)<z_r$. 

Since $(z,v)\to(z_l,0)$ as $\xi\to-\infty$ and $z(\xi)>z_l$ for all $\xi$ we have that $v(\xi)>0$ for $\xi\to-\infty$ and thus the orbit $\mathcal{R}$ lies in the region $Q_1=\{(x,v)~:~z>z_l,v>0\}$ for large values of $\xi$, while the orbit cannot intersect the segment $\ell_0=\{(z,v):v=0,~ z_l\leq z\leq z_r\}$ from $Q_1$. 
To see this (as in the proof of Theorem \ref{thm:regshockw}) assume that there is a point $\xi_0$ such that $(z(\xi_0),v(\xi_0))\in \ell_0$, i.e. $v(\xi_0)=0$ and $z(\xi_0)\leq z_r$, and $v(\xi)<0$ for $\xi>\xi_0$. This means that the point $z(\xi_0)$ is concave down with $z''(\xi_0)=v'(\xi_0)\leq 0$. From the second equation of (\ref{DynSys}) and similarly to (\ref{eq:vderiv}) we have that
$$v'(\xi_0)=-\frac{3}{K_1^2}F(e^{z(\xi_0)})>0\ ,$$
since $z_l< z(\xi)< z_r$, which is a contradiction. Thus, the orbit $\mathcal{R}$ must exit the region $Q_1$ at a point $\chi_0$ where $z(\chi_0)>z_r$ and $v(\chi_0)=0$. And $\chi_0$ will be a strict local maximum of $z$ and $z'(\xi)>0$ for $\xi<\chi_0$ which proves (a). Following the same steps as in the proof of Theorem \ref{thm:regshockw} we conclude that there is a point $\xi_0<\chi_0$ where $z''(\xi_0)=0$ and $z''(\xi)>0$ for $\xi<\xi_0$, and $z''(\xi)<0$ for $\xi>\xi_0$, which proves argument (b).

Let $\chi_1>\chi_0$ is the next local maximum of $z$ and let $\omega_1$ be the unique local minimum in the interval $(\chi_0,\chi_1)$. Inductively we define the increasing sequences $\{z_i\}_{i=0}^\infty$ and $\{\omega_i\}_{i=1}^\infty$ of local maxima and minima, respectively. Due to Lemma \ref{lem:locextr} we have that $z_r<z(\chi_i)<\bar{z}$ and $z_l<z(\omega_i)<z_r$. Moreover, $z(\chi_{i+1})<z(\chi_i)$ and $z(\omega_{i+1})>z(\omega_i)$ for all $i$ because otherwise the orbit $\mathcal{R}$ will intersect itself which is impossible since system (\ref{DynSys}) is autonomous, \cite{W2003}. For the same reason, $\omega_i$ and $\chi_i$ must accumulate at $+\infty$ and the solution cannot become constant $z_l$ in finite time.
\end{proof}

\begin{figure}[ht!]
  \centering
\includegraphics[width=\columnwidth]{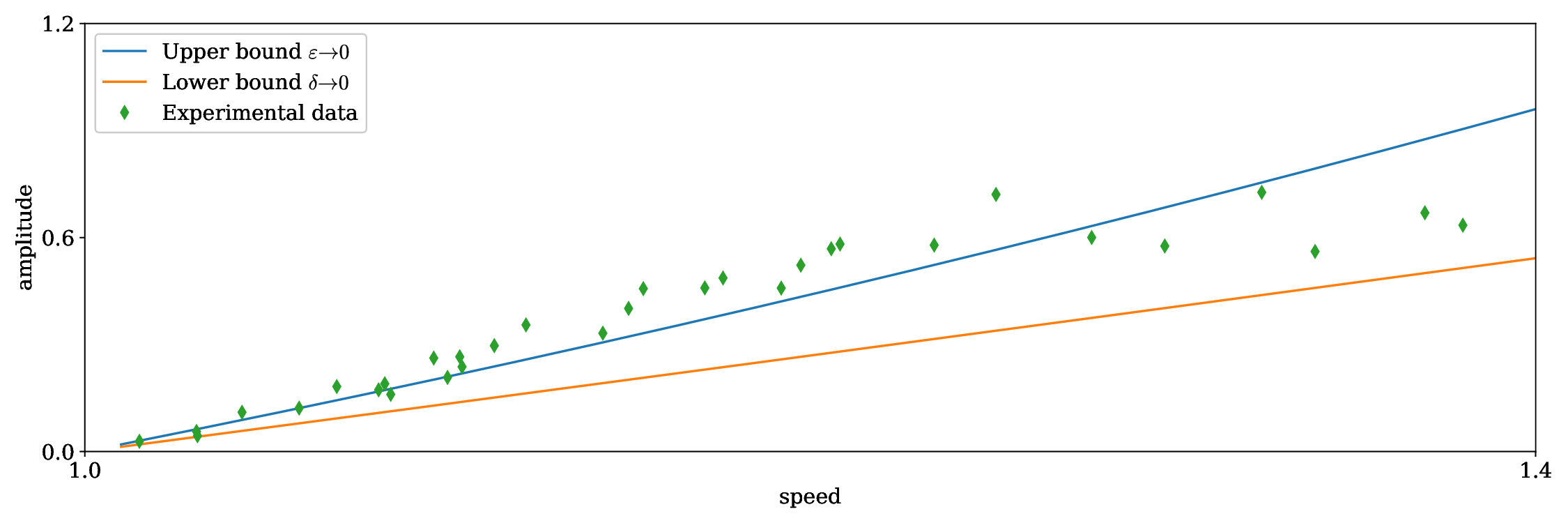}
  \caption{Amplitudes of undular bores observed in  \cite{Favre1935}, \cite{SZ2002} and \cite{T1994} {\em vs} speed $s$ in comparison with the extreme values of the amplitude $A$ as functions of time as estimated by the modified SGN equations}
  \label{fig:compare1}
\end{figure}

\begin{figure}[ht!]
  \centering
\includegraphics[width=\columnwidth]{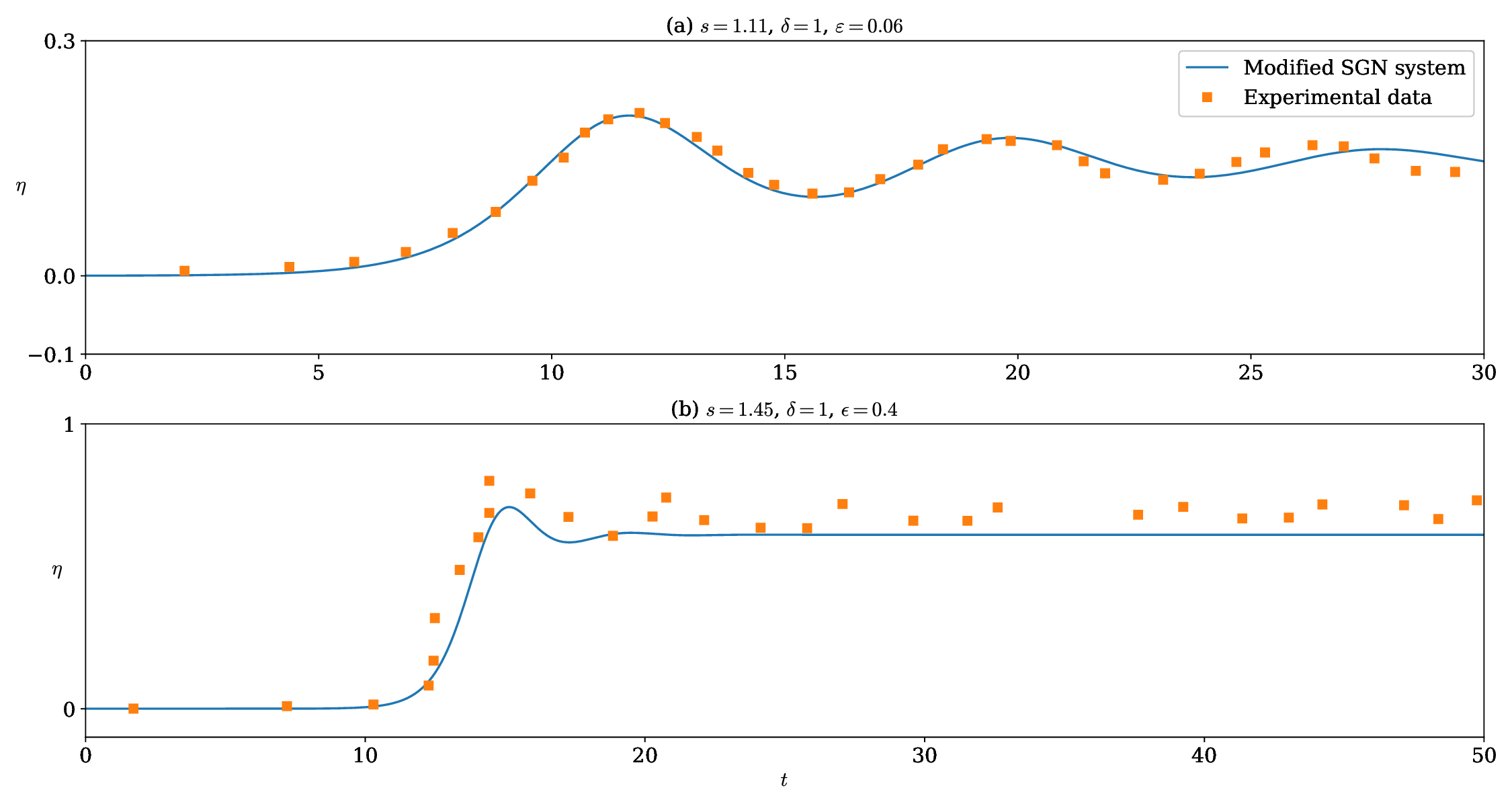}
  \caption{Oscillatory shock waves in comparison with experimental data of \cite{Chanson2010}}
  \label{fig:experiment12}
\end{figure}


We showed that the traveling wave solutions of the system (\ref{Serre}) have amplitude $A$ such that 
$$\zeta_r\leq A \leq K_1^2/\zeta_l^2\ ,$$
where the lower bound $\zeta_l$ and the upper bound $K_1^2/\zeta_l^2$ can be computed using the formulas (\ref{eq:leftlims2}) and (\ref{eq:defk1}).
Taking $\zeta_l=1$, i.e. considering waves on a zero mean water level, we compare, in Figure \ref{fig:compare1}, the observed amplitudes reported in various laboratory experiments by \cite{Favre1935}, \cite{SZ2002}, and \cite{T1994} with the extreme values of the amplitude $A$ as functions of time.

We observe that the predicted upper bound for the amplitude of the traveling wave (which coincides with the amplitude of the solitary wave of the classical SGN equations) is underestimated slightly compared to the experimental values for speeds between, around, $1.1$ and $1.25$, while it predicts precisely well the corresponding amplitudes for traveling waves with speeds less than $1.1$. For traveling waves of speed greater than $1.25$, dissipation becomes important and the values fall in the range where $\delta,\varepsilon$ are both positive. It is worth mentioning that in a similar comparison between a weakly nonlinear and weakly dispersive Boussinesq-Peregrine system with the same experimental data  in \cite{BMT2022}, the corresponding upper bound was slightly more accurate compared to the one presented here. This indicates that for the particular values, the small amplitude assumption is not restrictive. Nevertheless, the modified SGN equations  approximate sufficiently undular bores. 

In Figure \ref{fig:experiment12} we compare numerically generated oscillatory shock waves with experimental data of \cite{Chanson2010}. Specifically, we solve the system of ordinary differential equations (\ref{DynSys}) with appropriate initial conditions so as to ensure that the solution remains on the unstable manifold passing through the origin. In Figure \ref{fig:experiment12}(a) we present an undular bore with speed $s=1.11$ where the dissipation is not very important. On the contrary, in Figure \ref{fig:experiment12}(b) we present an undular bore with speed $s=1.45$ where is breaking and the dissipation due to turbulence is very important. For these reasons we consider the parameters $\varepsilon=0.06$ and $0.4$ so as to match the numerical solutions with the experimental data. The choice of these parameters is in agreement with the corresponding parameters used in \cite{BMT2022} in the case of the asymptotically derived dissipative Peregrine system. It is worth noting that the modified SGN equations underestimate slightly the value $\zeta_r$, concluding that for particular cases the weakly nonlinear dissipative Boussinesq-Peregrine system seems to be in better agreement with the experimental values than the SGN equations. 



\section{Some limiting cases}\label{sec:comparison}

\subsection{Waves in the absence of dissipation}\label{sec:nodisip}

In this section we compare the solutions of the modified SGN equations (\ref{eq:Serre1}) for $\varepsilon>0$ with the corresponding solution of the classical SGN equations ($\varepsilon=0$) that emerge from the same initial data. This comparison reveals that the difference between the two solutions is of order $\varepsilon t$. Therefore, for appropriate time scales and sufficiently small values of $\varepsilon$, the solutions of the classical SGN equations can be approximated by the solutions of the modified SGN system or the other way around. This observation explains why the dispersive shock waves emerging from smooth Riemann data are similar to oscillatory shock waves. Moreover, it justifies the use of the modified SGN equations for small values of $\varepsilon$. 

The proofs presented in this section rely on two lemmata from \cite{AB1991} which we mention here for the sake of completeness. The first lemma provides a useful inequality:
\begin{lemma}{(Lemma 1 of \cite{AB1991})}\label{lem:lemmab1}
Let $k\geq 3$ be an integer. Then there exists a constant $C=C(k)>0$ such that for any $y>0$ and $\epsilon$ in the interval $(0,1)$, the inequality
\begin{equation}
\epsilon y+y^2+\epsilon^{-1/2}y^3+\epsilon^{-1}y^4+\cdots +\epsilon^{(2-k)/2}y^k\leq C(\epsilon y+\epsilon^{2-k}y^k)\ ,
\end{equation}
is valid.
\end{lemma}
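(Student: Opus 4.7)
The plan is to reduce the multi-term bound to a pointwise single-term estimate through a clean rescaling, then sum. My key claim is that for each fixed $j \in \{2, 3, \ldots, k\}$,
\[
\epsilon^{(2-j)/2} y^j \leq \epsilon y + \epsilon^{2-k} y^k.
\]
Summing these $k-1$ inequalities and adding the initial $\epsilon y$ to both sides would then yield the lemma with $C = k$.

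To establish the pointwise bound, I would introduce the rescaled variable $u = y/\epsilon$. A direct substitution gives
\[
\epsilon^{(2-j)/2} y^j = \epsilon^{(j+2)/2} u^j, \qquad \epsilon y = \epsilon^2 u, \qquad \epsilon^{2-k} y^k = \epsilon^2 u^k.
\]
After dividing through by $\epsilon^2$, the claim reduces to
\[
\epsilon^{(j-2)/2} u^j \leq u + u^k, \qquad u > 0, \ \epsilon \in (0,1), \ 2 \leq j \leq k.
\]
Since $j \geq 2$ and $\epsilon \in (0,1)$, the prefactor $\epsilon^{(j-2)/2} \leq 1$ is harmless and can simply be dropped. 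What remains is the elementary one-variable inequality $u^j \leq u + u^k$, which follows by splitting cases: if $u \leq 1$ then $u^j \leq u$ (since $j \geq 1$), while if $u \geq 1$ then $u^j \leq u^k$ (since $j \leq k$).

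I do not foresee any real obstacle in this argument; the only observation is that the scaling $u = y/\epsilon$ collapses the two-parameter problem in $(\epsilon, y)$ to a one-variable polynomial inequality, with the residual $\epsilon$-dependence being a non-positive power that can be discarded for free. An equivalent route would be to apply weighted AM-GM to the factorization $y^j = y^{(k-j)/(k-1)} (y^k)^{(j-1)/(k-1)}$ and then verify that the $\epsilon$-powers cooperate; this produces essentially the same constant and the same calculation, so the rescaling presentation is the one I would write up.
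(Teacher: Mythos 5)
Your argument is correct. Note that the paper itself gives no proof of this statement --- it is quoted verbatim as Lemma~1 of the cited reference of Albert and Bona --- so there is nothing internal to compare against; your write-up supplies a valid self-contained proof. The key step checks out: with $u=y/\epsilon$ each term $\epsilon^{(2-j)/2}y^j$ becomes $\epsilon^{(j+2)/2}u^j$, and after dividing by $\epsilon^2$ the claim reduces to $\epsilon^{(j-2)/2}u^j\leq u+u^k$, where the prefactor is at most $1$ for $j\geq 2$ and $\epsilon\in(0,1)$, and $u^j\leq u+u^k$ follows from the two cases $u\leq 1$ and $u\geq 1$. Summing over $j=2,\dots,k$ and adding the leading term $\epsilon y$ gives the inequality with the explicit constant $C=k$, which is all the application in Section~5 requires (there only $k=3$ is used).
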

Here we will use this lemma for $k=3$. The second lemma, again adapted to our needs, is a Gronwall's type inequality.
\begin{lemma}{(Lemma 2 of \cite{AB1991})}\label{lem:lemmab2}
Let $A, B>0$ be given. Then there is a maximal time $T$ and a constant $C>0$ independent of $A$ and $B$ such that if $y(t)$ is any non-negative differentiable function defined on $[0,T]$ satisfying
$$\begin{aligned}
&\frac{d}{dt}y^2(t)\leq A y(t) + B y^3(t),\qquad 0\leq t\leq T\ ,\\
&y(0)=0\ ,
\end{aligned}$$
then 
$$y(t)\leq CA t, \quad \text{ for }\quad 0\leq t\leq T\ .$$
\end{lemma}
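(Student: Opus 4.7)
The plan is to recognize this as a Riccati-type comparison inequality and reduce it, via comparison with an explicitly solvable ODE, to an elementary bound on $\tan$. First I would pass from the hypothesis on $\tfrac{d}{dt}y^2 = 2y\,y'$ to a pointwise bound on $y'$ alone: at any $t$ where $y(t)>0$, dividing by $y$ yields
\[
y'(t) \;\leq\; \tfrac{1}{2}\bigl(A+B\,y^2(t)\bigr),
\]
while at any $t_0$ where $y(t_0)=0$, the non-negativity of $y$ forces $y'(t_0)=0$, so the inequality still holds trivially (its right-hand side is $A/2>0$). Hence the pointwise bound is valid on all of $[0,T]$.

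Next I would invoke the standard ODE comparison principle: the solution $z$ of the initial value problem $z'=(A+Bz^2)/2$, $z(0)=0$ dominates $y$ on any interval of common existence. This Riccati equation is integrable in closed form via the substitution $z=\sqrt{A/B}\,\tan\theta$ and yields
\[
z(t) \;=\; \sqrt{\tfrac{A}{B}}\,\tan\!\Bigl(\tfrac{\sqrt{AB}}{2}\,t\Bigr),
\]
finite for $t<\pi/\sqrt{AB}$. The structural reason that the constant $C$ can be taken independently of $A$ and $B$ is already visible here: under the rescaling $y=\sqrt{A/B}\,\tilde y$, $t=(2/\sqrt{AB})\,\tau$ the hypothesis becomes the parameter-free inequality $\tilde y\,\tilde y'\leq\tilde y+\tilde y^3$, so any result proved in those variables transfers to the original ones with an $A,B$-independent constant.

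To extract the claimed bound $y(t)\leq CAt$, I would restrict to a sub-interval on which $\tan$ is dominated by a linear function. Taking $T=1/\sqrt{AB}$, so that $\sqrt{AB}\,t/2\leq 1/2$ on $[0,T]$, and using the elementary bound $\tan x \leq 2x$ for $x\in[0,1/2]$ (a consequence of the convexity of $\tan$ together with the numerical fact $\tan(1/2)<1$), I obtain
\[
y(t)\;\leq\; z(t) \;\leq\; \sqrt{\tfrac{A}{B}}\cdot\sqrt{AB}\,t \;=\; A\,t,
\]
so the conclusion holds with $C=1$ on $[0,T]$.

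The main technical subtlety is the behavior at $t=0$: the comparison ODE has a non-vanishing right-hand side at $z=0$, so the domination $y\leq z$ near $t=0$ is not completely automatic. I would justify it by comparing $y$ with the perturbed family $z_\varepsilon'=(A+Bz_\varepsilon^2)/2$, $z_\varepsilon(0)=\varepsilon>0$ (where standard comparison applies), and then passing to the limit $\varepsilon\downarrow 0$. A more self-contained route that sidesteps this point entirely is a bootstrap on $M(t):=\sup_{0\leq s\leq t}y(s)$: integrating the original inequality gives $M^2(t)\leq A\,M(t)\,t+B\,M^3(t)\,t$, and the ansatz $M(t)\leq 2At$ closes on $[0,1/(2\sqrt{AB})]$, yielding the same conclusion with $C=2$.
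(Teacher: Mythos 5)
Your proof is correct. Note that the paper itself does not prove this lemma---it is quoted verbatim from Albert and Bona \cite{AB1991} ``for the sake of completeness''---so there is no in-paper argument to compare against; your write-up simply supplies a proof where the paper gives only a citation. The comparison with the explicit Riccati solution $z(t)=\sqrt{A/B}\,\tan(\sqrt{AB}\,t/2)$, the rescaling remark explaining why $C$ is independent of $A$ and $B$ while $T$ is not, and the linearization of $\tan$ on $[0,1/2]$ all check out, as does the alternative bootstrap on $M(t)=\sup_{s\le t}y(s)$ (which is closer in spirit to how such estimates are usually closed in this literature). One small inaccuracy: your claim that non-negativity forces $y'(t_0)=0$ at \emph{any} zero of $y$ is false at the left endpoint $t_0=0$ (e.g.\ $y(t)=t$ with $A=2$ satisfies the hypotheses near $0$ yet has $y'(0)=1$); it holds only at interior zeros. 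This is harmless, since the comparison/Gronwall argument only needs the pointwise bound $y'\le\tfrac12(A+By^2)$ on intervals where $y>z>0$, and your $\varepsilon$-perturbation (or the bootstrap) bypasses the endpoint entirely, but the sentence as written should be restricted to interior zeros.
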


Let $h^{\varepsilon}$ and $q^\varepsilon=h^\varepsilon u^\varepsilon$ denote the solution of the system (\ref{eq:Serre2}) for $\varepsilon\geq 0$, emerging from the same initial conditions. Assume that the initial conditions are appropriately smooth so as the integrals in the following expressions make sense and also guarantee the existence of unique solution to both systems. Denote $e^h=h^\varepsilon-h^0$ and  $e^u=u^\varepsilon-u^0$. Then for any value $\varepsilon \geq 0$ we have the following comparison theorem:
\begin{theorem}
Assume that for the initial conditions $h^\varepsilon(x,0)=h(x,0)$ and $u^\varepsilon(x,0)=u(x,0)$ the solutions $(h^\varepsilon, u^\varepsilon)\in C^1([0,T],W^1_\infty)\times C^1([0,T],W^2_\infty)$ are uniformly bounded solutions of the system (\ref{eq:Serre1}) for all $\varepsilon\in [0,1)$ with bounds independent of $\varepsilon$. We further assume that the solutions satisfy the non-cavitation assumption (\ref{eq:cavity}) for all $\varepsilon\in [0,1)$. If $$\mathcal{E}(t)=\sqrt{\|e^h\|^2+\|e^u\|^2_1}\ ,$$ then there is a constant $C$ independent of $\varepsilon$ such that
\begin{equation}
   \mathcal{E}(t)\leq C\varepsilon t\ ,
\end{equation}
for all $t\in [0,T]$.
\end{theorem}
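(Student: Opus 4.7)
The plan is to subtract the two versions of (\ref{eq:Serre1}) to obtain evolution equations for $e^h$ and $e^u$, run an energy estimate on a functional equivalent to $\|e^h\|^2+\|e^u\|_1^2$ that is tailored to the nonlinear dispersive operator, and reduce the resulting identity to a differential inequality of the form required by Lemma~\ref{lem:lemmab2}. Subtracting the mass equations gives
\begin{equation*}
e^h_t+\bigl(h^\varepsilon e^u+e^h u^0\bigr)_x=0\ ,
\end{equation*}
while subtracting the momentum equations and rewriting each product as a linear part in the differences plus a nonlinear remainder yields a schematic equation
\begin{equation*}
e^u_t+e^h_x+u^0 e^u_x+e^u u^\varepsilon_x-\frac{\delta}{3h^\varepsilon}\Bigl[(h^\varepsilon)^3 e^u_{xt}+R\Bigr]_x=\varepsilon\,\frac{(h^\varepsilon u^\varepsilon)_{xx}}{h^\varepsilon}\ ,
\end{equation*}
in which the remainder $R$ collects the nonlinear contributions coming from the differences of $uu_{xx}-u_x^2$ and of $h^3$. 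The non-cavitation hypothesis (\ref{eq:cavity}) together with the uniform bounds on $(h^\varepsilon,u^\varepsilon)$ guarantees that $1/h^\varepsilon$ and $(h^\varepsilon)^3$ are uniformly bounded above and below for all $\varepsilon\in[0,1)$.

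I would then work with the weighted energy
\begin{equation*}
\mathcal{E}(t)^2:=\frac{1}{2}\int\Bigl((e^h)^2+h^\varepsilon(e^u)^2+\tfrac{\delta}{3}(h^\varepsilon)^3(e^u_x)^2\Bigr)\dd x\ ,
\end{equation*}
which, by non-cavitation, is equivalent to $\|e^h\|^2+\|e^u\|_1^2$. Multiplying the mass equation by $e^h$, the momentum equation by $e^u$, integrating in $x$, and integrating by parts so as to migrate the top-order term $(h^\varepsilon)^3 e^u_{xt}$ into $\tfrac{1}{2}\tfrac{d}{dt}\bigl[(h^\varepsilon)^3(e^u_x)^2\bigr]$, the leading contributions should telescope into $\tfrac{d}{dt}\mathcal{E}^2$. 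The price is a family of commutator terms involving $h^\varepsilon_t=-(h^\varepsilon u^\varepsilon)_x$, each controlled pointwise by the $W^{1,\infty}$-bound on $h^\varepsilon u^\varepsilon$. The remaining integrals split into three groups: (i) quadratic transport terms dominated by $C\mathcal{E}^2$; (ii) genuinely nonlinear remainders such as $\int e^h(e^u_x)^2\dd x$ or $\int e^u(e^u_x)^2\dd x$, dominated by $C\mathcal{E}^3$ via the one-dimensional Sobolev embedding $H^1\hookrightarrow L^\infty$; and (iii) the inhomogeneous term $\varepsilon\int\tfrac{(h^\varepsilon u^\varepsilon)_{xx}}{h^\varepsilon}e^u\dd x$, which after one integration by parts is dominated by $C\varepsilon\mathcal{E}$ using Cauchy--Schwarz and the $W^{2,\infty}$-regularity of $u^\varepsilon$.

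Assembling these bounds yields
\begin{equation*}
\frac{d}{dt}\mathcal{E}^2\le C_1\varepsilon\,\mathcal{E}+C_2\,\mathcal{E}^2+C_3\,\mathcal{E}^3\ ,
\end{equation*}
with constants uniform in $\varepsilon\in[0,1)$. Lemma~\ref{lem:lemmab1} with $k=3$ absorbs the middle term into $C(\varepsilon\mathcal{E}+\varepsilon^{-1}\mathcal{E}^3)$, and since $\mathcal{E}(0)=0$, Lemma~\ref{lem:lemmab2} then delivers $\mathcal{E}(t)\le C\varepsilon\,t$ on the maximal interval $[0,T]$.

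The main obstacle will be the algebraic bookkeeping inside the dispersive operator: unpacking the difference of $h^3(u_{xt}+uu_{xx}-u_x^2)$ into a top-order $e^u_{xt}$-contribution, a collection of lower-order linear terms in $(e^h,e^u,e^h_x,e^u_x,e^u_{xx})$, and purely nonlinear remainders, and then tracking the $h^\varepsilon_t$-commutators produced by the integration by parts against the variable weight $(h^\varepsilon)^3$. Once this accounting is complete, the remaining manipulations are routine applications of H\"older's and Young's inequalities.
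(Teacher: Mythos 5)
Your proposal is correct and follows essentially the same route as the paper: subtract the two systems, run an energy estimate on the weighted functional $\int\bigl((e^h)^2+h^\varepsilon(e^u)^2+\tfrac{\delta}{3}(h^\varepsilon)^3(e^u_x)^2\bigr)\dd x$, bound the forcing by $C\varepsilon\mathcal{E}$ and the remainders by $C\mathcal{E}^2+C\mathcal{E}^3$, and close with Lemmas~\ref{lem:lemmab1} and~\ref{lem:lemmab2}. The only cosmetic difference is that the paper first multiplies the momentum equation by $h^\varepsilon$ (resp.\ $h^0$) so that the self-adjoint operator $h-\tfrac{\delta}{3}\partial_x h^3\partial_x$ produces your weighted energy directly when tested against $e^u$ — which is what your choice of $\mathcal{E}$ implicitly requires anyway, since testing your $1/h^\varepsilon$-form against $e^u$ alone would leave an uncontrolled top-order $e^u_{xt}$ commutator.
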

\begin{proof}
To simplify the notation we will use the symbol $\lesssim$ to denote $\leq C$ for a generic constant $C$ independent of $\varepsilon$. 

First note that subtracting the equations of system (\ref{eq:Serre1}) corresponding to the solutions $(h^0,u^0)$ and $(h^\varepsilon,u^\varepsilon)$ for $\varepsilon>0$ yields the following equations for the error functions $e^h$ and $e^u$:
\begin{equation}\label{eq:syserrf}
\begin{aligned}
&e^h_t+[h^\varepsilon u^\varepsilon-h^0 u^0]_x=0\ ,\\
&h^\varepsilon u^\varepsilon_t-h^0 u^0_t+\tfrac{1}{2}[(h^\varepsilon)^2 - (h^0)^2]_x+h^\varepsilon u^\varepsilon u^\varepsilon_x-h^0u^0u^0_x-\tfrac{\delta}{3}[\gamma^\varepsilon-\gamma^0]_x=\varepsilon [h^\varepsilon u^\varepsilon]_{xx}\ ,
\end{aligned}
\end{equation}
where the second equation occurs after multiplication of the corresponding equations of (\ref{eq:Serre1}) with $h^\varepsilon$ and $h^0$, respectively.

To treat the nonlinear term of the first equation of (\ref{eq:syserrf}) we observe
\begin{equation}\label{eq:firsten1}
h^\varepsilon u^\varepsilon -h^0 u^0 = h^\varepsilon e^u+u^\varepsilon e^h-e^he^u\ .
\end{equation}
Thus, we rewrite the first equation of (\ref{eq:syserrf}) as
$$e^h_t+(h^\varepsilon e^u+u^\varepsilon e^h-e^h e^u)_x=0\ ,$$
which after multiplication with $e^h$ and integration leads to
$$
\tfrac{1}{2}\frac{d}{dt}\|e^h\|^2 + \left((h^\varepsilon e^u)_x,e^h\right) +\tfrac{1}{2}\left(u^\varepsilon_xe^h,e^h \right) -\left((e^h e^u)_x,e^h \right) =0  \ .
$$
Further to that, we write $\left((e^h e^u)_x,e^h \right)=\tfrac{1}{2}\left((e^h)^2,e^u_x\right)$ leading to 
\begin{equation}\label{eq:firsten}
\tfrac{1}{2}\frac{d}{dt}\|e^h\|^2 + \left((h^\varepsilon e^u)_x,e^h\right) +\tfrac{1}{2}\left(u^\varepsilon_x,(e^h)^2 \right) +\tfrac{1}{2}\left((e^h)^2,e^u_x\right) =0  \ .
\end{equation}
Similarly, we write the second equation of (\ref{eq:syserrf}) after multiplication with $e^u$ and integration over $\mathbb{R}$ in the form
\begin{equation}\label{eq:compeq1}
(h^\varepsilon u^\varepsilon_t-h^0 u^0_t,e^u)+\tfrac{\delta}{3}\left(\gamma^\varepsilon-\gamma^0,e^u_x \right)=\tfrac{1}{2}\left([h^\varepsilon]^2-[h^0]^2,e^u_x \right) -\left(h^\varepsilon u^\varepsilon u^\varepsilon_x-h^0 u^0 u^0_x,e^u \right)-\varepsilon(q^\varepsilon_x,e^u_x)\ .
\end{equation}

In the sequel we will denote by $C^k_\varepsilon$ for $k=1,2,\dots$, generic constants that depend on $\varepsilon$. We treat the terms in the previous sums separately: First, we consider the first term of the right-hand side of (\ref{eq:compeq1}) 
\begin{equation}\label{eq:hineq1}
\Omega_1=\tfrac{1}{2}([h^\varepsilon]^2-[h^0]^2,e^u_x)=\tfrac{1}{2}(e^h(h^\varepsilon+h^0),e^u_x)\leq C \|h^\epsilon+h^0\|_\infty\|e^h\|\|e^u_x\|\lesssim \|e^h\|^2+\|e^u_x\|^2\ .
\end{equation}
Furthermore, using the relation (\ref{eq:firsten1}) we have that
$$
\begin{aligned}
h^\varepsilon u^\varepsilon u^\varepsilon_x-h^0 u^0 u^0_x &=h^\varepsilon u^\varepsilon e^u_x+(h^\varepsilon u^\varepsilon -h^0 u^0)u^0_x\\
&=h^\varepsilon u^\varepsilon e^u_x+(h^\varepsilon e^u+u^\varepsilon e^h-e^he^u)u^0_x\\
&=h^\varepsilon u^\varepsilon e^u_x+h^\varepsilon u^0_x e^u+u^\varepsilon u^0_x e^h-u^0_x e^he^u \ ,
\end{aligned}
$$
which yields 
\begin{equation}\label{eq:hineq2}
\Omega_2=(h^\varepsilon u^\varepsilon u^\varepsilon_x-h^0 u^0 u^0_x , e^u) \lesssim \|e^h\|^2+\|e^u\|^2+\|e^u_x\|^2 \ .
\end{equation}

To estimate the quantity $\gamma^\varepsilon-\gamma^0$ we estimate the terms of this difference separately:
First, we write
$$
\left([h^\varepsilon]^3u^\varepsilon_{xt}-[h^0]^3u^0_{xt},e^u_x\right) = \left((h^\varepsilon)^3e^u_{xt},e^u_x \right)+\left(([h^\varepsilon]^3-[h^0]^3 )u^0_{xt},e^u_x\right)\ ,
$$
which implies
\begin{equation}\label{eq:compeq2}
\left([h^\varepsilon]^3u^\varepsilon_{xt}-[h^0]^3u^0_{xt},e^u_x\right) = 
\tfrac{1}{2}\frac{d}{dt}\int_{-\infty}^\infty [h^\varepsilon]^3 [e^u_x]^2~dx-\tfrac{3}{2}\left([h^\varepsilon]^2h^\varepsilon_t,[e^u_x]^2 \right) +\left( ([h^\varepsilon]^3-[h^0]^3)u^0_{xt},e^u_x \right)\ .
\end{equation}
The next term in the expression $\gamma^\varepsilon-\gamma^0$ can be written
$$
\begin{aligned}
\left[h^\varepsilon\right]^3 u^\varepsilon u^\varepsilon_{xx}-[h^0]^3u^0 u^0_{xx} &= [h^\varepsilon]^3u^\varepsilon u^\varepsilon_{xx}-[h^0]^3u^0(u^\varepsilon_{xx}-e^u_{xx})\\
&=[h^0]^3u^0e^u_{xx}+([h^\varepsilon]^3 u^\varepsilon-[h^0]^3u^0)u^\varepsilon_{xx}\ .
\end{aligned}
$$
Since
$$
[h^\varepsilon]^3 u^\varepsilon-[h^0]^3u^0=[h^\varepsilon]^3 u^\varepsilon-[h^0]^3(u^\varepsilon- e^u)=([h^\varepsilon]^3-[h^0]^3)u^\varepsilon+[h^0]^3 e^u\ ,
$$
we have that
$$
[h^\varepsilon]^3 u^\varepsilon u^\varepsilon_{xx}-[h^0]^3u^0 u^0_{xx}=[h^0]^3u^0 e^u_{xx}+([h^\varepsilon]^3-[h^0]^3)u^\varepsilon u^\varepsilon_{xx}+[h^0]^3 u^\varepsilon_{xx}e^u\ .
$$
Therefore,
$$
\begin{aligned}
([h^\varepsilon]^3 u^\varepsilon u^\varepsilon_{xx}-[h^0]^3u^0 u^0_{xx},e^u_x) &=-\tfrac{1}{2}\left(3[h^0]^2h^0_xu^0+[h^0]^3u^0_x,[e^u_x]^2\right)+\left(([h^\varepsilon]^3-[h^0]^3)u^0 u^0_{xx},e^u_x \right) \\
&+([h^0]^3u^\varepsilon_{xx}e^u,e^u_x)\ ,
\end{aligned}
$$
which leads to the inequality
\begin{equation}\label{eq:hineq3}
\Omega_3=\tfrac{\delta}{3}([h^\varepsilon]^3 u^\varepsilon u^\varepsilon_{xx}-[h^0]^3u^0 u^0_{xx},e^u_x) \lesssim \|e^h\|^2+\|e^u\|^2+\|e^u_x\|^2 \ .
\end{equation}

The last term in the expression $\gamma^\varepsilon-\gamma^0$ can be written
$$
\begin{aligned}
 \left[h^\varepsilon\right]^3[u^\varepsilon_x]^2-[h^0]^3[u^0_x]^2 &=[h^\varepsilon]^3 [u^\varepsilon_x]^2 -[h^0]^3(u^\varepsilon_x-e^u_x)^2\\
&=([h^\varepsilon]^3-[h^0]^3)[u^\varepsilon_x]^2+2[h^0]^3u^\varepsilon_xe^u_x-[h^0]^3[e^u_x]^2\\
&=([h^\varepsilon]^3-[h^0]^3)[u^\varepsilon_x]^2+[h^0]^3u^\varepsilon_xe^u_x+[h^0]^3u^0_xe^u_x\ .
\end{aligned}
$$
Thus,
\begin{equation}\label{eq:hineq4}
\Omega_4=\tfrac{\delta}{3}([h^\varepsilon]^3[u^\varepsilon_x]^2-[h^0]^3[u^0_x]^2,e^u_x) \lesssim \|e^h\|^2+\|e^u_x\|^2\ .
\end{equation}

Finally, the first term on the left-hand side of (\ref{eq:compeq1}) can be written as
$$
h^\varepsilon u^\varepsilon_t-h^0 u^0_t = h^\varepsilon e^u_t-e^he^u_t+u^\varepsilon_te^h\ ,
$$
and this yields
\begin{equation}
\begin{aligned}\label{eq:compeq3}
(h^\varepsilon u^\varepsilon_t-h^0 u^0_t,e^u)&= (h^\varepsilon e^u_t,e^u)-(e^he^u_t,e^u)+(u^\varepsilon_t e^h,e^u)\\
&=\tfrac{1}{2}\frac{d}{dt}\int_{-\infty}^\infty h^\varepsilon [e^u]^2~dx -\tfrac{1}{2}(h^\varepsilon_t,[e^u]^2)-\tfrac{1}{2}\frac{d}{dt}\int_{-\infty}^\infty e^h[e^u]^2~dx\\
&\quad +\tfrac{1}{2}(e^h_t,[e^u]^2)+(u^\varepsilon_t e^h,e^u)\ .
\end{aligned}
\end{equation}

Adding the two equations (\ref{eq:firsten}), (\ref{eq:compeq1})  and substituting (\ref{eq:compeq2}), (\ref{eq:compeq3})  we obtain
$$
\begin{aligned}
\tfrac{1}{2}\frac{d}{dt}\|e^h\|^2 &+\tfrac{1}{2}\frac{d}{dt}\int_{-\infty}^\infty \left(h^\varepsilon [e^u]^2+\tfrac{\delta}{3}[h^\varepsilon]^3 [e^u_x]^2 \right)~dx =\tfrac{1}{2}\frac{d}{dt}\int_{-\infty}^\infty e^h[e^u]^2~dx +\tfrac{1}{2}(h^\varepsilon_t,[e^u]^2)\\
&\quad -\tfrac{1}{2}(e^h_t,[e^u]^2)-(u^\varepsilon_t e^h,e^u)
 -\left((h^\varepsilon e^u)_x,e^h\right) -\tfrac{1}{2}\left(u^\varepsilon_xe^h,e^h \right) +\tfrac{1}{2}\left((e^h)^2, e^u_x \right)\\
&\quad +\tfrac{\delta}{2}\left([h^\varepsilon]^2h^\varepsilon_t,[e^u_x]^2 \right) -\tfrac{\delta}{3}\left( ([h^\varepsilon]^3-[h^0]^3)u^0_{xt},e^u_x \right)
 -\varepsilon(q^\varepsilon_x,e^u_x)+\Omega_1-\Omega_2-\Omega_3-\Omega_4\ .
\end{aligned}
$$
Using the inequalities (\ref{eq:hineq1}), (\ref{eq:hineq2}) and (\ref{eq:hineq3}) we get
$$
\tfrac{1}{2}\frac{d}{dt}\|e^h\|^2+\tfrac{1}{2}\frac{d}{dt}\int_{-\infty}^\infty \left(h^\varepsilon [e^u]^2+\tfrac{\delta}{3}[h^\varepsilon]^3 [e^u_x]^2 \right)~dx
\lesssim \varepsilon\|e^u_x\|+ \|e^h\|^2+\|e^u\|^2+\|e^u_x\|^2\ .
$$
Integrating with respect to $t$, and using Lemma \ref{lem:lemmab1} and the assumption (\ref{eq:cavity}) where $h^\varepsilon\geq c_0>0$ for all $\varepsilon\in(0,1)$ we obtain
\begin{equation}
\mathcal{E}^2(t) \lesssim \|e^h\|^2+\int_{-\infty}^\infty \left(h^\varepsilon [e^u]^2+\tfrac{\delta}{3}[h^\varepsilon]^3 [e^u_x]^2 \right)~dx \lesssim  \int_0^t  \varepsilon \mathcal{E}(\tau)  + \varepsilon^{-1} \mathcal{E}^3(\tau)~d\tau\ .
\end{equation}
Therefore, from Lemma \ref{lem:lemmab2} we have that
$$\mathcal{E}(t) \lesssim \varepsilon t\ ,$$
which completes the proof of the theorem.
\end{proof}

\begin{figure}[ht!]
  \centering
\includegraphics[width=\columnwidth]{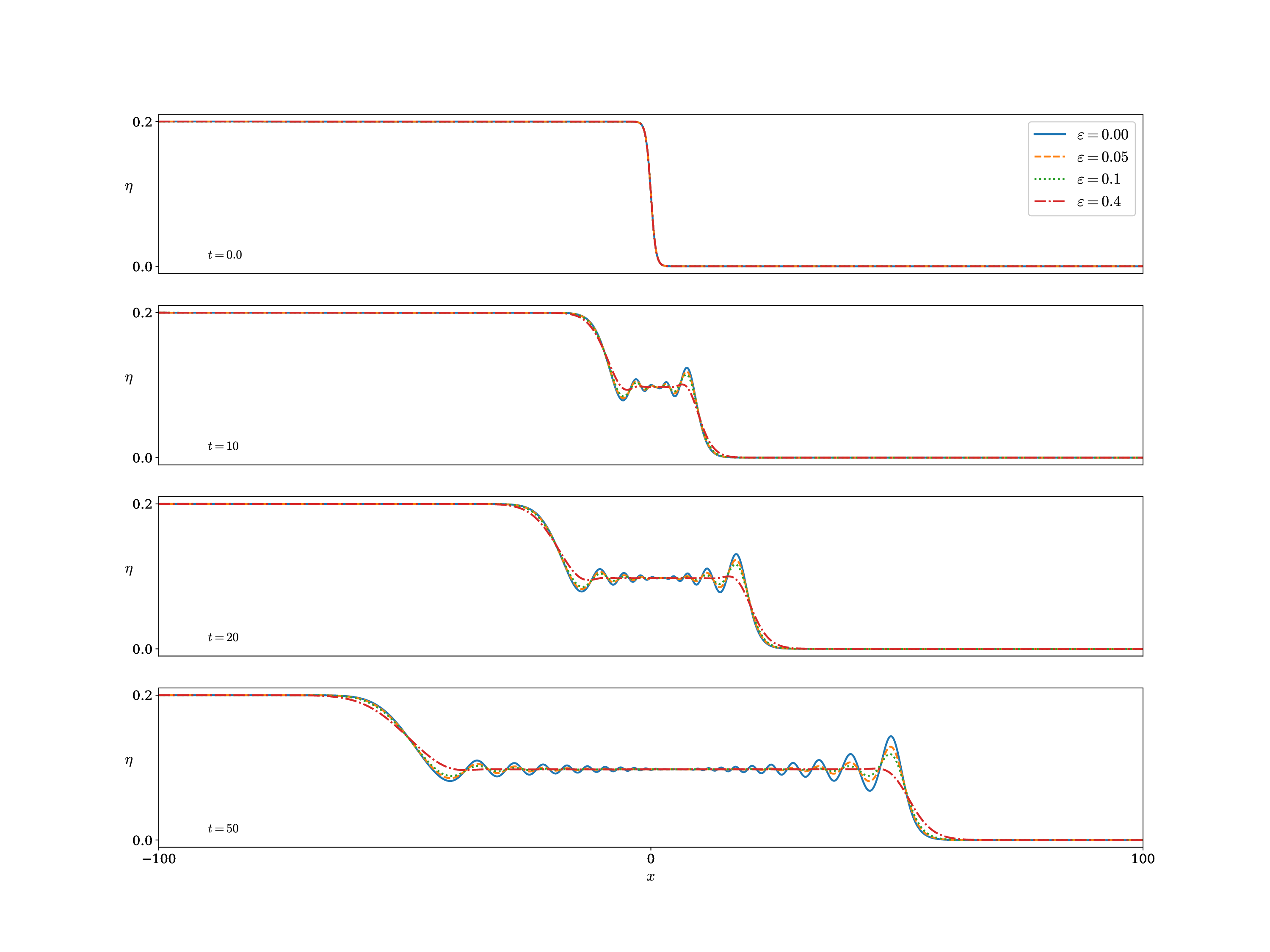}
  \caption{Comparison between of oscillatory shock waves of the modified SGN system and a dispersive shock wave of the classical SGN system ($\delta=1/3$)}
  \label{fig:comparison}
\end{figure}

Experimentally, we generate undular bores by allowing a heap of water to collapse under gravity almost instantaneously. This is known as the dam-break problem, imagining a hypothetical dam disappearing instantaneously. Such experiments can be simulated numerically using initial data that resembles the regularized Riemann data shown in Figure \ref{fig:comparison}. Figure \ref{fig:comparison} illustrates the generation of waves for a dam-break problem with a dispersion coefficient of $\delta=1/3$. Specifically, we observe the difference between oscillatory shock waves generated for different values of $\varepsilon>0$ and a dispersive shock wave of the SGN equations for $\varepsilon=0$. The top figure shows the initial condition used for $h(x,0)$ which originated from regularized Riemann data. The initial velocity profile was $u(x,0)=0$. In this particular experiment and for the times we present all the shock waves travel at very similar speeds and the solutions remain relatively close. For the numerical solution of the Serre equations and its modification we used the standard Galerkin/Finite element method with periodic boundary conditions analysed and justified in \cite{MID2014,ADM2017}.

\subsection{Convergence to entropic shocks}

The convergence of the dissipative-dispersive shock waves of the KdV-Burgers equations as $\delta$ and/or $\varepsilon$ approach zero has been studied in \cite{BS1985,PR2007}. A more generalized analysis, which apparently includes the solutions we studied in this work has been presented in \cite{BMT2023}.

In particular, we focus on equation (\ref{ODE2}), expressed as:
\begin{equation}
z'' + cz' + \Phi'(z) = 0\ ,
\end{equation}
where $\Phi'(z) = \frac{3}{K_1^2}F(e^z)$ represents the derivative of the potential function given in equation (\ref{eq:potential}). As demonstrated in Section \ref{sec:existence}, we observe that $\Phi'(z) < 0$ for $z_l < z < z_r$, $\Phi'(z) > 0$ for $z_r < z < \bar{z}$, and we also find that $\Phi''(z_l) < 0$ and $\Phi''(z_r) > 0$. Furthermore, in the same section, we established that $0 = \Phi(z_r) < \Phi(z) < \Phi(z_l) = \Phi(\bar{z})$ for $z$ within the interval $(z_l, \bar{z})$. According to \cite[Prop. 6]{BMT2023}, the solution can be divided into two distinct regions:
\begin{itemize}
\item A region characterized by large-amplitude oscillations, with a size of $O(1/c)$.
\item A region characterized by small-amplitude oscillations, also with a length of $O(1/c)$.
\end{itemize}
These oscillations, in terms of the original variables, reduce to sizes of $O(\sqrt{\delta}/c) = O(\delta/\varepsilon)$ and tend towards zero as $\delta$ and $\varepsilon$ approach zero, provided that $\delta = o(\varepsilon)$. In this scenario, the dissipative-dispersive shocks of the modified SGN equations converge to the classical shock waves of the non-dispersive shallow-water wave equations.

\section{Conclusions}

The classical non-dissipative Serre-Green-Nagdi system describes strongly nonlinear and weakly dispersive water waves, making it an attractive choice for studying undular bores. However, experimental observations suggest that to accurately approximate undular bores occurring in natural settings, some level of dissipation due to turbulence is necessary. Unfortunately, there is no existing theory that addresses dissipative strongly nonlinear waves. For this reason, we have introduced a modification to the classical Serre-Green-Naghdi system by incorporating an artificial term. We have justified the use of this specific term through several observations:
\begin{enumerate}
    
\item[(i)] We have demonstrated that this modified system supports oscillatory and regularized traveling wave solutions that closely resemble natural undular bores.

\item[(ii)] The solutions of the modified system converge to the non-dissipative solutions as dissipation approaches zero explaining the shape of the dispersive shock waves at initial stages of their formation.

\item[(iii)] In cases where both dispersion and dissipation diminish at a certain rate, the dissipative-dispersive shocks tend to evolve into entropic shock waves.
\end{enumerate}

Our findings are supported further by comparisons with laboratory data, providing additional empirical validation for our theoretical evidence.

A summary of the characteristic values of dissipative-dispersive shock waves of the modified Serre-Green-Nagdi equations are presented in Table \ref{tab:summary}.

\begin{table}[ht!]
\begin{tabular}{lc}
Phase speed & $s=(q_l-q_r)/(h_l-h_r)$ \\\hline
Right-side limits & $h_r$, $q_r$ given \\\hline
Left-side  depth limit & $h_l=(-h_r + \sqrt{h_r^2 + 8h_r( q_r/h_r- s)^2})/2$\\\hline
Left-side momentum limit & $q_l= (2 q_r - 3 s h_r  +s \sqrt{h_r^2 + 8 h_r (q_r/h_r -  s)^2})/2$ \\\hline
Oscillatory shock wave & $s^2\varepsilon^2/\delta<4(s h_l-q_l)^2(h_l^3-K_1^2)/3h_l^2$ \\\hline
Regularized shock wave & $s^2\varepsilon^2/\delta\geq 4(s h_l-q_l)^2(h_l^3-K_1^2)/3h_l^2$
\end{tabular}
\caption{Summary of characteristic values}\label{tab:summary}
\end{table}


\end{document}